\newtheorem{Theorem}{Theorem}[section]
\newtheorem{lemma}[Theorem]{Lemma}
\newtheorem{proposition}[Theorem]{Proposition}
\newtheorem{definition}[Theorem]{Definition}
\newtheorem{remark}[Theorem]{Remark}
\newtheorem{example}[Theorem]{Example}
\newcommand{\ZZ}{\mathbb {Z}}
\newcommand{\tr}{\triangleright}
\newcommand{\ho}{\textrm{Hom}_{B}}
\newcommand{\hoq}{\textrm{Hom}_{Q}}
\newcommand{\Ho}{\textbf{Hom}_{B}}
\newcommand{\Hoq}{\textbf{Hom}_{Q}}
\newcommand{\q}{\mathcal{Q}}
\def \dow {\underline{\ast }}
\def \up {\overline{\ast }}
\begin{document}
\title{From biquandle structures to Hom-biquandles}

\author{Eva Horvat, Alissa S. Crans}
\address{University of Ljubljana\\
Faculty of Education\\
Kardeljeva plo\v s\v cad 16\\
1000 Ljubljana, Slovenia}
\email{eva.horvat@pef.uni-lj.si}
\address{Loyola Marymount University \\
Department of Mathematics \\
One LMU Drive, Suite 2700\\
Los Angeles, CA 90045\\
USA}
\email{acrans@lmu.edu}

\keywords{quandle, biquandle, biquandle structure, Hom-biquandle, Hom-quandle.}
\subjclass[2010]{57M27, 57M25}
\date{\today}
\maketitle

\begin{abstract}
We investigate the relationship between the quandle and biquandle coloring invariant and obtain an enhancement of the quandle and biquandle coloring invariants using biquandle structures.  

We also continue the study of biquandle homomorphisms into a medial biquandle begun in \cite{CR1}, finding biquandle analogs of results therein.  We describe the biquandle structure of the Hom-biquandle, and consider the relationship between the Hom-quandle and Hom-biquandle.
\end{abstract}

\section {Introduction}
\label{sec0}
Quandles and their generalizations, biquandles, are algebraic structures whose axioms encode the Reidemeister, and oriented Reidemeister, moves from classical knot theory.  Biquandle invariants provide a method for distinguishing between certain virtual (and some non-virtual) knots.  In this article, we study the relationship between quandles and biquandles, with the goal of finding biquandle versions of results pertaining to sets of quandle homomorphisms.

We begin in Section \ref{sec1} with a brief review of basic quandle and biquandle definitions and facts together with fundamental examples.  In Section \ref{sec2} we recall the notion of a biquandle structure introduced in \cite{EH} and provide examples of different such structures one can place on the same quandle.  We further present properties that a biquandle does, and does not, inherit from its associated quandle. We consider mediality and commutativity of biquandles and biquandle structures. We turn our focus to connections with knot theory in Section \ref{sec3} by exploring the relationship between the quandle and biquandle coloring invariant, illustrating this with two concrete examples that demonstrate how the richness of biquandle structures on a given quandle can improve the strength of (bi)quandle representation invariants. We define an enhancement of quandle and biquandle coloring invariants based on biquandle structures. In Section \ref{sec4} we continue the study of biquandle homomorphisms into a medial biquandle begun in \cite{CR1}, finding biquandle analogs of results therein.  We describe the biquandle structure of the Hom-biquandle, and consider the relationship between the Hom-quandle and Hom-biquandle, adding some sample calculations.  We conclude in Section \ref{sec5} with questions for future investigation.

\section{Preliminaries}
\label{sec1}

We begin by recalling definitions and examples of quandles and biquandles.  We refer the reader to \cite{CAR, JO, FR, FJK} for more details.

\begin{definition}\label{def1} A \textbf{quandle} is a set $Q$ equipped with a binary operation $*\colon Q\times Q\to Q$ that satisfies the following three axioms:
\begin{itemize}
\item $x*x=x$ for every $x\in Q$; 
\item the map $R _y\colon Q\to Q$ given by $R _y (x)=x*y$ is a bijection for every $y\in Q$; and
\item $(x*y)*z=(x*z)*(y*z)$ for every $x,y,z\in Q$. 
\end{itemize} 
\end{definition}
On any set $Q$, we can define a \textbf{trivial quandle} using the binary operation $x*y=x$ for every $x,y\in Q$.  Given two quandles $(Q,*)$ and $(K,\circ )$, a map $f\colon Q\to K$ is called a \textbf{quandle homomorphism} if $f(x*y)=f(x)\circ f(y)$ for every $x,y\in Q$. Note that the third axiom above implies that each $R_y$ is a quandle homomorphism.\\
 
 Two particularly important examples of quandles include the following.
 
 \begin{itemize}
 \item[(a)] On any group $G$ we may define a quandle operation by $g*h=hg^{-1}h$ for any $g,h\in G$.  This gives what is known as the \textbf{core quandle,} $\textit{Core}\,(G)$.
 \item[(b)] For an abelian group $G$ and a chosen group automorphism $\phi \in Aut(G)$, the binary operation $g*h=\phi (g)+(1-\phi )(h)$ defines an \textbf{affine quandle,} $\textit{Aff}\,(G,\phi )$.  
 \end{itemize}
We can generalize the notion of a quandle as follows: 
\begin{definition}\label{def2} A \textbf{biquandle} is a set $X$ with two binary operations $\dow , \up \colon X\times X\to X$ that satisfy the following axioms:
\begin{itemize}
\item $x\dow x=x\up x$ for every $x\in X$; 
\item the maps $\alpha _y, \beta _y \colon X\to X$ and $S\colon X\times X\to X\times X$, given by $\alpha _y (x)=x\dow y$, $\beta _y (x)=x\up y$ and $S(x,y)=(y\up x,x\dow y)$ are bijections for every $y\in X$; and
\item the exchange laws \begin{xalignat*}{1}
& (x\dow y)\dow (z\dow y)=(x\dow z)\dow (y\up z)\\
& (x\dow y)\up (z\dow y)=(x\up z)\dow (y\up z) \\
& (x\up y)\up (z\up y)=(x\up z)\up (y\dow z)
\end{xalignat*} hold for every $x,y,z\in X$. 
\end{itemize} 
\end{definition}

We note that if $x\up y=x$ for any $x,y\in (X,\dow ,\up )$, then $(X,\dow )$ is a quandle. Thus biquandles are a generalization of quandles. In fact, any biquandle $(X,\dow ,\up )$ has an \textbf{associated quandle,} $\q (X)=(X,*)$, defined by the operation $x*y=\beta _{y}^{-1}(x\dow y)$, and this induces a functor $\mathcal{Q}$ from the category of biquandles to the category of quandles \cite{ASH}, \cite{ISH}, \cite[Lemma 3.1]{EH}.

The biquandle analogs of our quandle examples are:
\begin{itemize} 
\item[($\mathrm{a}'$)] For any group $G$, the binary operations $g\dow h=h^{-1}g^{-1}h$ and $g\up h=h^{-2}g$ define a biquandle that is called the {\it Wada biquandle}. Its associated quandle is the core quandle $\textit{Core}\,(G)$. 
\item[($\mathrm{b}'$)] Let $G$ be an abelian group and choose two automorphisms $\phi ,\psi \in Aut(G)$. The operations $g\dow h=\psi (\phi (g))+(\psi -\psi \phi )(h)$ and $g\up h=\psi (g)$ define a biquandle whose associated quandle is the affine quandle $\textit{Aff}\,(G,\phi )$. 
\end{itemize}
In addition, given two quandles $(Q,*)$ and $(K,\circ )$, the product $Q\times K$ is a biquandle with the operations $(x,y)\dow (z,w)=(x*z,y)$ and $(x,y)\up (z,w)=(x,y\circ ^{-1}w)$.

Given two biquandles $(X,\dow ,\up )$ and $(Z,\veebar ,\barwedge)$, a map $f\colon X\to Z$ is called a \textbf{biquandle homomorphism} if $f(x\dow y)=f(x)\veebar f(y)$ and $f(x\up y)=f(x)\barwedge f(y)$ for every $x,y\in X$.

\section{Biquandle structures on quandles}
\label{sec2}

The algebraic structures of quandles and biquandles are closely intertwined. As mentioned in Section \ref{sec1}, every biquandle $X$ has an associated quandle $\q (X)$. On the other hand, on a given quandle we may impose several nonequivalent structures that define biquandles. In this Section, we present the notion of a `biquandle structure' and discuss which properties of biquandles are inherited from the properties of their associated quandle.

\begin{definition}\label{def3}  Let $(Q,*)$ be a quandle. A \textbf{biquandle structure} on $(Q,*)$ is a family of quandle automorphisms $\{\beta _{y}\colon Q\to Q|\, y\in Q\}\subset Aut(Q)$ that satisfies the following conditions:
\begin{enumerate}
\item $\beta _{\beta _{y}(x*y)} \circ \beta _{y}=\beta _{\beta _{x}(y)} \circ \beta _{x}$ for every $x,y\in Q$, and
\item the map defined by $y\mapsto \beta _{y}(y)$ is a bijection of $Q$.
\end{enumerate}
\end{definition}

By \cite{EH}, every biquandle structure defines a biquandle, and every biquandle arises as a biquandle structure on its associated quandle. 
\begin{Theorem}\cite[Theorem 3.2]{EH} \label{th1} Let $\{\beta _{y}\, |\, y\in Q\}$ be a biquandle structure on a quandle $(Q,*)$. Define two binary operations on $Q$ by $x\dow y=\beta _{y}(x*y)$ and $x\up y=\beta _{y}(x)$ for every $x,y\in Q$. Then $X=(Q,\dow ,\up )$ is a biquandle and $\mathcal{Q}(X)=(Q,*)$. 
\end{Theorem}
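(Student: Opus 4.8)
The plan is to verify each of the biquandle axioms from Definition \ref{def2} directly, using the defining properties of a biquandle structure from Definition \ref{def3}, and then to check that applying the functor $\q$ recovers $(Q,*)$.

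First I would check the idempotency-type axiom $x\dow x = x\up x$. By definition $x\dow x = \beta_x(x*x)$ and $x\up x = \beta_x(x)$, so this reduces to $x*x = x$, which is the first quandle axiom. Next I would verify that $\alpha_y$, $\beta_y$, and $S$ are bijections. Since each $\beta_y$ is a quandle automorphism, it is in particular a bijection; and $\alpha_y(x) = \beta_y(x*y) = \beta_y(R_y(x))$ is a composite of two bijections (the second quandle axiom gives $R_y$ bijective), hence a bijection. For $S(x,y) = (y\up x, x\dow y) = (\beta_x(y), \beta_y(x*y))$, I would exhibit the inverse explicitly: given a pair $(u,v)$, one recovers $y = \beta_{?}^{-1}(u)$ — here the subscript itself depends on $x$, so some care is needed. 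The cleanest route is to use condition (2) of Definition \ref{def3}: the map $y \mapsto \beta_y(y)$ is a bijection, which together with the first two axioms should let one invert $S$; alternatively one can appeal to the standard fact that $S$ is a bijection iff $\alpha_y$ and $\beta_y$ are, combined with the ``sideways invertibility'' encoded in (2). I expect this to be the main obstacle, since untangling the dependence of the subscript of $\beta$ on the other variable is the delicate point.

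Then I would turn to the three exchange laws. For each, I would expand both sides using $x\dow y = \beta_y(x*y)$ and $x\up y = \beta_y(x)$, push the $\beta$'s through the quandle operation using the fact that each $\beta_y$ is a quandle homomorphism (so $\beta_y(a*b) = \beta_y(a)*\beta_y(b)$), and repeatedly apply the third quandle axiom $(a*b)*c = (a*c)*(b*c)$ to rearrange terms. At the key step where two different composites of $\beta$'s must be matched up, I would invoke condition (1) of Definition \ref{def3}, namely $\beta_{\beta_y(x*y)} \circ \beta_y = \beta_{\beta_x(y)} \circ \beta_x$; this identity is precisely what is designed to make the first exchange law go through, and the remaining two should follow by similar manipulations (possibly after also using that condition (1) holds for all $x,y$, applied to suitably substituted arguments).

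Finally, to show $\q(X) = (Q,*)$, I would compute the associated-quandle operation: by the formula in the excerpt, the operation of $\q(X)$ is $x \mapsto \beta_y^{-1}(x\dow y)$, and since $x\dow y = \beta_y(x*y)$, this equals $\beta_y^{-1}(\beta_y(x*y)) = x*y$, recovering the original quandle exactly. I would close by remarking that no extra hypotheses were used beyond those in Definition \ref{def3}, so the construction is completely general.
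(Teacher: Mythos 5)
Your overall strategy---direct verification of the axioms of Definition \ref{def2} from the two conditions of Definition \ref{def3}, followed by the one-line computation of $\q (X)$---is the right one, and your treatment of the first axiom, of the bijectivity of $\alpha _y$ and $\beta _y$, of the exchange laws, and of $\q (X)=(Q,*)$ is correct as sketched. (For completeness: each exchange law follows by one application of condition (1) in the form $\beta _{z\dow y}\circ \beta _y=\beta _{y\up z}\circ \beta _z$, the fact that each $\beta _y$ is a quandle homomorphism, and, for the first law, the identity $(x*z)*y=(x*y)*(z*y)$.)

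The one genuine gap is exactly where you suspected it: the bijectivity of $S$. You leave it unresolved, and the fallback you offer---that ``$S$ is a bijection iff $\alpha _y$ and $\beta _y$ are''---is not a standard fact and is false for general operations; invertibility of $S$ is an independent requirement, which is precisely why Definition \ref{def3} carries condition (2). Here is how to close the gap. Write $D(y)=\beta _y(y)$ for the diagonal map, a bijection by (2), and set $(u,v)=S(x,y)=(\beta _x(y),\beta _y(x*y))$. Condition (1) says $\beta _v\circ \beta _y=\beta _u\circ \beta _x$ as maps; evaluating at $y$ gives
\begin{equation*}
\beta _v(D(y))=\beta _u(\beta _x(y))=\beta _u(u)=D(u)\;,
\end{equation*}
so $y=D^{-1}\bigl(\beta _v^{-1}(D(u))\bigr)$ is recovered from $(u,v)$, and then $x=R_y^{-1}\bigl(\beta _y^{-1}(v)\bigr)$. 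This proves injectivity and exhibits the candidate inverse. For surjectivity, define $(x,y)$ from an arbitrary pair $(u,v)$ by these same formulas; then $\beta _y(x*y)=v$ holds by construction, and applying condition (1) to this pair yields $D(\beta _x(y))=\beta _v(D(y))=D(u)$, whence $\beta _x(y)=u$ by injectivity of $D$. With this argument inserted, your proof is complete.
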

\begin{Theorem}\cite[Theorem 3.4]{EH} \label{th2} Let $X=(Q,\dow ,\up )$ be a biquandle and let $\mathcal{Q}(X)=(Q,*)$ be its associated quandle. Then the family of maps $\{\beta _{y}\, |\, y\in Q\}$ is a biquandle structure on $\mathcal{Q}(X)$. 
\end{Theorem}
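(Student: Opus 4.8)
The plan is to check the two numbered conditions of Definition~\ref{def3} for the family $\{\beta_y \mid y\in Q\}$, where throughout we use $\beta_y(x)=x\up y$, $\alpha_y(x)=x\dow y$, and the associated quandle operation $x*y=\beta_y^{-1}(x\dow y)$. The first move is bookkeeping: rewrite the three exchange laws of Definition~\ref{def2} as relations among the maps $\alpha_y,\beta_y$. They say precisely that $\alpha_{z\dow y}\circ\alpha_y=\alpha_{y\up z}\circ\alpha_z$, that $\beta_{z\dow y}\circ\alpha_y=\alpha_{y\up z}\circ\beta_z$, and that $\beta_{z\up y}\circ\beta_y=\beta_{y\dow z}\circ\beta_z$, for all $y,z\in Q$; only the last two will be needed. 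Two small identities are then immediate: $\beta_y(x*y)=x\dow y$ straight from the definition of $*$, and $\beta_y(y)=y\up y=y\dow y$ from the first biquandle axiom.

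Granting this, I would first note that each $\beta_y$ is a quandle automorphism of $(Q,*)$: it is a bijection by the biquandle axioms, and for the homomorphism property one computes $\beta_z(x)*\beta_z(y)=\beta_{y\up z}^{-1}\big((x\up z)\dow(y\up z)\big)$ from the definition of $*$, rewrites $(x\up z)\dow(y\up z)=\beta_{z\dow y}(x\dow y)$ using the second operator identity, and observes that---because $x\dow y$ ranges over all of $Q$ as $x$ does---the desired equality $\beta_z(x*y)=\beta_z(x)*\beta_z(y)$ is equivalent to $\beta_{y\up z}\circ\beta_z=\beta_{z\dow y}\circ\beta_y$, i.e.\ to the third exchange law. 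Condition~(1) of Definition~\ref{def3} is then almost nothing: since $\beta_y(x*y)=x\dow y$ and $\beta_x(y)=y\up x$, it reads $\beta_{x\dow y}\circ\beta_y=\beta_{y\up x}\circ\beta_x$, which is again the third exchange law in operator form.

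The remaining point, condition~(2), asks that $y\mapsto\beta_y(y)=y\up y=y\dow y$ be a bijection of $Q$, and this is the step I expect to need the most thought, since it is where bijectivity of the sideways map $S$ must be exploited. Injectivity is quick: if $a\up a=b\up b$ then also $a\dow a=b\dow b$, so $S(a,a)=(a\up a,a\dow a)=(b\up b,b\dow b)=S(b,b)$ and hence $a=b$. For surjectivity, given $z\in Q$ I would use surjectivity of $S$ to choose $(a,b)$ with $S(a,b)=(z,z)$, that is $b\up a=a\dow b=z$, and then apply the third operator identity with $a$ in place of $y$ and $b$ in place of $z$ to get $\beta_{b\up a}\circ\beta_a=\beta_{a\dow b}\circ\beta_b$, i.e.\ $\beta_z\circ\beta_a=\beta_z\circ\beta_b$; since $\beta_z$ is a bijection this forces $\beta_a=\beta_b$, whence $z=b\up a=\beta_a(b)=\beta_b(b)=b\up b$, so $z$ lies in the image. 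The main obstacle is precisely isolating this last instance of the exchange laws that pushes a preimage of a diagonal element back onto the diagonal; everything else reduces to the formal manipulation of the $\alpha_y,\beta_y$ relations. Combined with Theorem~\ref{th1}, this verification also shows that passing from a biquandle to its biquandle structure and back returns the original biquandle.
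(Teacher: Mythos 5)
Your proof is correct. The paper itself offers no argument for this statement---it is quoted from \cite[Theorem 3.4]{EH}---so there is no in-text proof to compare against; judged on its own, your verification is complete: the operator forms of the exchange laws are transcribed correctly, the identification of both the automorphism property of each $\beta _{y}$ and condition (1) of Definition \ref{def3} with the third exchange law (after swapping the roles of $y$ and $z$) is accurate, and the one genuinely nontrivial step---surjectivity of $y\mapsto \beta _{y}(y)$, where you pull back a diagonal element $(z,z)$ under $S$ and use the exchange law to force $\beta _{a}=\beta _{b}$ and hence $z=b\up b$---is exactly the right use of the bijectivity of $S$.
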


Nonisomorphic biquandle structures on quandles of order 2 and 3 are listed below. We follow the standard notation of denoting the elements of a finite quandle $(Q,*)$ of order $n$ by numbers $1, 2, \ldots, n$ and its operation table by an $n \times n$ matrix whose $(i, j)$th entry is $i*j,$ see \cite{SN}. A biquandle structure $\{\beta _{y}\, |\, y\in Q\}\subset Aut(Q)$ on such a quandle will be represented by the $n$-tuple $(\beta _{1},\ldots ,\beta _{n})$, where the automorphism $\beta _{i}$ is written as an element of the symmetric group $S_{n}$ in disjoint cycle notation. All computations were performed using \verb|Python|.

\begin{example}
There exists one quandle of order two, namely the trivial quandle with operation table $\begin{bmatrix} 
1 & 1 \\
2 & 2 
\end{bmatrix}$. On this quandle we may impose two nonisomorphic biquandle structures: $(\textrm{id}, \textrm{id})$ or $((12),(12))$. 
\end{example}

\begin{example}
\label{order3}
There are three nonisomorphic quandles of order three. 
\begin{itemize}
\item[(a)] On the trivial quandle with operation table $\begin{bmatrix} 
1 & 1 & 1\\
2 & 2 & 2\\
3 & 3 & 3\\
\end{bmatrix}$ there are 5 nonisomorphic biquandle structures: $(\textrm{id}, \textrm{id},\textrm{id})$, $(\textrm{id}, \textrm{id},(12))$, $(\textrm{id},(23),(23))$, $((23),(23),(23))$ and $((123),(123),(123))$. 
\item[(b)] On the quandle with operation table $\begin{bmatrix} 
1 & 1 & 1\\
3 & 2 & 2\\
2 & 3 & 3\\
\end{bmatrix}$, there are 4 nonisomorphic biquandle structures: $(\textrm{id}, \textrm{id},\textrm{id})$, $(\textrm{id},(23),(23))$, $((23), \textrm{id},\textrm{id})$ and $((23),(23),(23))$.    We note that this quandle is not affine.
\item[(c)] On the quandle with operation table $\begin{bmatrix} 
1 & 3 & 2\\
3 & 2 & 1\\
2 & 1 & 3\\
\end{bmatrix}$, there are 6 nonisomorphic biquandle structures: $(\textrm{id}, \textrm{id},\textrm{id})$, $(\textrm{id},(123),(132))$, $((23),(23),(23))$, $((23),(13),(12))$, $((12),(23),(13))$ and $((123),(123),(123))$.   We remark that this quandle is affine.  
\end{itemize}
\end{example} 
A biquandle structure $\{\beta _{y}\, |\, y\in Q\}$ is called \textbf{constant} if $\beta _{y}=\beta _{z}$ for every $y,z\in Q$. By \cite[Corollary 3.8]{EH}, the number of nonisomorphic constant biquandle structures on a quandle $Q$ is the number of conjugacy classes of $Aut(Q)$. The automorphism groups of the quandles in Example \ref{order3} (a) and (c) are isomorphic to $S_{3}$, thus they admit 3 nonisomorphic constant biquandle structures. The automorphism group of the quandle in (b) is $\ZZ _{2}$, which admits only two nonisomorphic constant biquandle structures. \\

Certain properties of biquandles are inherited from their associated quandles while others are not; we discuss examples of both in the remainder of this Section.  

\begin{lemma}\label{lemma1} In a biquandle $X$, the equality $x\dow y=x\up y$ holds for every $x,y\in X$ if any only if $\q (X)$ is a trivial quandle. 
\end{lemma}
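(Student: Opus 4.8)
The plan is to unwind the definition of the associated quandle and essentially nothing more. Recall from Section~\ref{sec1} that $\q(X)=(X,*)$ carries the operation $x*y=\beta_{y}^{-1}(x\dow y)$, where $\beta_{y}\colon X\to X$ is the bijection given by $\beta_{y}(x)=x\up y$. Each of the two implications will fall out of substituting this formula and using that $\beta_{y}$ is invertible.

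For the forward implication, I would assume $x\dow y=x\up y$ for all $x,y\in X$. Then for every pair $x,y$ one computes
\[
x*y=\beta_{y}^{-1}(x\dow y)=\beta_{y}^{-1}(x\up y)=\beta_{y}^{-1}\bigl(\beta_{y}(x)\bigr)=x,
\]
so the operation $*$ on $\q(X)$ is the trivial one, i.e.\ $\q(X)$ is a trivial quandle.

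For the converse, I would assume $\q(X)$ is trivial, so $x*y=x$ for all $x,y$; that is, $\beta_{y}^{-1}(x\dow y)=x$. Applying the bijection $\beta_{y}$ to both sides yields $x\dow y=\beta_{y}(x)=x\up y$, which is the claimed equality.

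There is no genuine obstacle here: the content of the lemma is precisely the observation that $x\dow y=x\up y$ is equivalent to $\beta_{y}^{-1}(x\dow y)=x$, and the latter is by definition the statement that the operation of $\q(X)$ is trivial. The only point requiring (minor) care is to invoke the correct formula for the operation of $\q(X)$ as recorded in Section~\ref{sec1}, together with the fact (from Definition~\ref{def2}) that each $\beta_{y}$ is a bijection of $X$.
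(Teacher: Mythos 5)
Your proof is correct and follows essentially the same route as the paper: both arguments reduce to the identity $x\dow y=\beta_{y}(x*y)$ (equivalently $x*y=\beta_{y}^{-1}(x\dow y)$) together with $x\up y=\beta_{y}(x)$ and the bijectivity of $\beta_{y}$. The paper merely phrases this via Theorems~\ref{th1} and \ref{th2} rather than the definition of $\q(X)$ directly, which is an immaterial difference.
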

\begin{proof} By Theorems \ref{th1} and \ref{th2}, the biquandle operations are given by $x\dow y=\beta _{y}(x*y)$ and $x\up y=\beta _{y}(x)$ for any $x,y\in X$. Since $\beta _{y}$ is a bijection for every $y\in X$, the equivalence follows. 
\end{proof}

Recall that a quandle $Q$ is \textbf{connected} if for any $x,y\in Q$ there exist elements $z_{1},\ldots ,z_{n}\in Q$ and $\epsilon _{1},\ldots ,\epsilon _{n}\in \{1,-1\}$, such that $y=x*^{\epsilon _{1}}z_{1}*^{\epsilon _{2}}\ldots *^{\epsilon _{n}}z_{n}$.   For example, the quandle in Example \ref{order3} (c) is connected.  

\begin{definition}\label{def4} Given a biquandle $X$, consider the equivalence relation $\sim _{c}$ generated by $x\sim _{c}x\dow y$ and $x\sim _{c}x\up y$ for every $x,y\in X$. The equivalence classes are called \textbf{connected components}, and the biquandle is called \textbf{connected} if there is only one class. 
\end{definition}

\begin{proposition} \label{prop1} If $Q$ is a connected quandle, then for every biquandle structure on $Q$ the induced biquandle is also connected. 
\end{proposition}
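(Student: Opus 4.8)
The plan is to show that under the equivalence relation $\sim _{c}$ of Definition \ref{def4}, all elements of $Q$ collapse into a single class, by combining the connectedness of $(Q,*)$ with the explicit description of the biquandle operations from Theorem \ref{th1}.

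First I would record the two basic relations that come straight from the definition of $\sim _{c}$ once we substitute the formulas $x\dow y=\beta _{y}(x*y)$ and $x\up y=\beta _{y}(x)$: for every $x,y\in Q$ we have $x\sim _{c}\beta _{y}(x*y)$ and $x\sim _{c}\beta _{y}(x)$. Applying the second relation with $x$ replaced by $x*y$ gives $x*y\sim _{c}\beta _{y}(x*y)$, and combining this with the first relation (and using that $\sim _{c}$ is symmetric and transitive) yields $x*y\sim _{c}x$ for all $x,y\in Q$.

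Next I would upgrade this to both signs. Since $x*y\sim _{c}x$ holds for every $x$, substituting $x\mapsto x*^{-1}y$ gives $x\sim _{c}x*^{-1}y$ as well, so that $x\sim _{c}x*^{\epsilon }y$ for every $x,y\in Q$ and every $\epsilon \in \{1,-1\}$. Finally, given arbitrary $a,b\in Q$, connectedness of the quandle provides $z_{1},\ldots ,z_{n}\in Q$ and $\epsilon _{1},\ldots ,\epsilon _{n}\in \{1,-1\}$ with $b=a*^{\epsilon _{1}}z_{1}*^{\epsilon _{2}}\cdots *^{\epsilon _{n}}z_{n}$; applying the previous step repeatedly along this chain gives $a\sim _{c}a*^{\epsilon _{1}}z_{1}\sim _{c}\cdots \sim _{c}b$. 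Hence the induced biquandle $X$ has only one connected component.

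The argument is essentially a direct unwinding of the definitions, so I do not anticipate a serious obstacle; the one point that requires a little care is the passage from $\epsilon =1$ to $\epsilon =-1$, which is why I isolate the substitution $x\mapsto x*^{-1}y$ as its own step rather than folding it into the chain computation.
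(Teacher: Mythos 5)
Your proposal is correct and follows essentially the same route as the paper: both arguments rewrite the biquandle operations as $x\dow y=\beta _{y}(x*y)$ and $x\up y=\beta _{y}(x)$ to deduce $x\sim _{c}x*^{\pm 1}y$, and then chain this along the expression provided by connectedness of $Q$ (the paper phrases the chaining as an induction on $n$, you phrase it as repeated application, which is the same thing). The only cosmetic difference is that you isolate $x*y\sim _{c}x$ as a standalone step via the common element $\beta _{y}(x*y)$, whereas the paper states the equality $y\up z_{1}=x\dow z_{1}$ directly; the content is identical.
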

\begin{proof}Suppose $\{\beta _{y}\, |\, y\in Q\}\subset Aut(Q)$ is a biquandle structure on $Q$.  We denote the induced biquandle by $B=(Q,\dow ,\up )$. Choose any $x,y\in B$. Since $Q$ is a connected quandle, there exist $z_{1},\ldots ,z_{n}\in Q$ and $\epsilon _{1},\ldots ,\epsilon _{n}\in \{-1,1\}$, such that $y=x*^{\epsilon _{1}}z_{1}*^{\epsilon _{2}}\ldots *^{\epsilon _{n}}z_{n}$. We prove that $x\sim _{c}y$ by induction on $n$. If $n=1$, it follows that either $y\up z_{1}=x\dow z_{1}$ (when $\epsilon _{1}=1$) or $y\dow z_{1}=x\up z_{1}$ (when $\epsilon _{1}=-1$), and thus $x\sim _{c}y$. Now suppose that $x\sim x*^{\epsilon _{1}}z_{1}*^{\epsilon _{2}}\ldots *^{\epsilon _{n-1}}z_{n-1}$ for some $n$. Denoting $w=x*^{\epsilon _{1}}z_{1}*^{\epsilon _{2}}\ldots *^{\epsilon _{n-1}}z_{n-1}$, we obtain $y=w*^{\epsilon _{n}}z_{n}$ and it follows that either $y\up z_{n}=w\dow z_{n}$ or $y\dow z_{n}=w\up z_{n}$, which implies $y\sim _{c}w$ and thus $x\sim _{c}y$. 
\end{proof}

\begin{definition} \label{def5} A quandle $Q$ is called \textbf{medial} if the equality $(x*y)*(z*w)=(x*z)*(y*w)$ holds for every $x,y,z,w\in Q$.  (All three quandles in Example \ref{order3} are medial.)  A biquandle $X$ is called \textbf{medial} if the equalities 
\begin{xalignat*}{1}
& (x\dow y)\dow (z\dow w)=(x\dow z)\dow (y\dow w)\\
& (x\dow y)\up (z\dow w)=(x\up z)\dow (y\up w)\textrm{ and }\\
& (x\up y)\up (z\up w)=(x\up z)\up (y\up w)
\end{xalignat*} hold for every $x,y,z,w\in X$. 
\end{definition}

\begin{lemma} \label{lemma2} If $Q$ is a medial quandle, then for every constant biquandle structure on $Q$, the induced biquandle is also medial. 
\end{lemma}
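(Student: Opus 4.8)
The plan is to unwind the definitions using the explicit description of the induced biquandle from Theorem \ref{th1}. Since the biquandle structure is constant, there is a single quandle automorphism $\beta \in Aut(Q)$ with $\beta_y = \beta$ for all $y\in Q$, so the induced operations become $x\dow y = \beta(x*y)$ and $x\up y = \beta(x)$. The structural facts I will exploit are that $\beta$ is a quandle homomorphism, so $\beta(a*b)=\beta(a)*\beta(b)$ for all $a,b\in Q$, and that $*$ is medial on $Q$.

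First I would verify the third medial identity $(x\up y)\up(z\up w)=(x\up z)\up(y\up w)$: both sides reduce at once to $\beta^2(x)$, so it holds using only that $\beta$ is a map. Next, for the middle identity $(x\dow y)\up(z\dow w)=(x\up z)\dow(y\up w)$, the left side is $\beta(x\dow y)=\beta^2(x*y)$, while the right side is $\beta\big((x\up z)*(y\up w)\big)=\beta\big(\beta(x)*\beta(y)\big)=\beta^2(x*y)$, where I use that $\beta$ respects $*$; mediality is not needed here either.

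Finally, for the first identity $(x\dow y)\dow(z\dow w)=(x\dow z)\dow(y\dow w)$, I would expand the left side as $\beta\big(\beta(x*y)*\beta(z*w)\big)=\beta^2\big((x*y)*(z*w)\big)$ and the right side analogously as $\beta^2\big((x*z)*(y*w)\big)$; the mediality of $Q$, i.e.\ $(x*y)*(z*w)=(x*z)*(y*w)$, is precisely what equates the two. Since this is the only point at which the medial hypothesis is invoked, there is no genuine obstacle: the lemma follows by direct substitution, the one thing worth being careful about being to record exactly where each ingredient ($\beta$ an automorphism versus $Q$ medial) is used, which also makes transparent why mediality of $Q$ alone suffices.
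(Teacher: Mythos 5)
Your proof is correct and follows essentially the same route as the paper: both expand the induced operations $x\dow y=f(x*y)$, $x\up y=f(x)$ for the single automorphism $f$ and verify the three medial identities by direct computation, using that $f$ respects $*$ for the middle identity and mediality of $Q$ only for the first. Your added bookkeeping of exactly which hypothesis each identity needs is a nice touch but does not change the argument.
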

\begin{proof} Let $Q$ be a medial quandle and $f\in Aut(Q)$. We denote the biquandle induced by the constant biquandle structure $\{f\}$ on $Q$ by $B=(Q,\dow ,\up ).$  Using the mediality of $Q$, the computations:\begin{xalignat*}{1}
& (x\dow y)\dow (z\dow w)=f(f(x*y)*f(z*w))=f^{2}((x*z)*(y*w))=(x\dow z)\dow (y\dow w)\\
& (x\dow y)\up (z\dow w)=f(f(x*y))=f(f(x)*f(y))=(x\up z)\dow (y\up w) \, and \\
&  (x\up y)\up (z\up w)=f(f(x))=(x\up z)\up (y\up w)
\end{xalignat*} imply that $B$ is a medial biquandle. 
\end{proof}

Commutativity is a possible, but not very common property of quandles.  As the following result shows, a commutative quandle cannot be associated to a commutative biquandle.  

\begin{lemma} \label{lemma3} Let $Q$ be a commutative quandle of order $\geq 2$. Then there exists no commutative biquandle $X$ with $\q (X)=Q$. 
\end{lemma}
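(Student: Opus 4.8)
The plan is to argue by contradiction, translating the statement into the language of biquandle structures via Theorems~\ref{th1} and~\ref{th2}. Suppose $X=(Q,\dow ,\up )$ is a commutative biquandle with $\q (X)=Q$. By Theorem~\ref{th2} the family $\{\beta _y\,|\,y\in Q\}$ is a biquandle structure on $Q$, and (using Theorem~\ref{th1} together with the definition $x*y=\beta _y^{-1}(x\dow y)$ of the associated quandle) the operations of $X$ are recovered as $x\dow y=\beta _y(x*y)$ and $x\up y=\beta _y(x)$ for all $x,y\in Q$. In particular every $\beta _y$ is a bijection of $Q$, being a quandle automorphism.

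The first step is to unpack commutativity of $\up $: from $x\up y=y\up x$ we obtain
\[
\beta _y(x)=\beta _x(y)\qquad\text{for all }x,y\in Q .
\]
The second step uses commutativity of $\dow $ together with commutativity of $Q$. Since $x*y=y*x$, the equation $x\dow y=y\dow x$ reads $\beta _y(x*y)=\beta _x(y*x)=\beta _x(x*y)$; that is, $\beta _y$ and $\beta _x$ take the same value at the point $x*y$. Substituting the displayed identity into each side (applied with second argument $x*y$) rewrites this as $\beta _{x*y}(y)=\beta _{x*y}(x)$, whence injectivity of $\beta _{x*y}$ gives $x=y$. Since this must hold for every pair $x,y\in Q$ while $|Q|\ge 2$, choosing $x\ne y$ produces the desired contradiction.

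The computation is short, so I do not expect a genuine obstacle; the one point that needs care is that the argument must invoke \emph{both} biquandle operations. Commutativity of $\dow $ by itself is insufficient: on any commutative quandle one may take the constant biquandle structure $\{\mathrm{id}\}$, whose induced biquandle has $x\dow y=x*y$ (commutative, since $Q$ is) and $x\up y=x$ (not commutative when $x\ne y$); this is, for instance, the structure $(\mathrm{id},\mathrm{id},\mathrm{id})$ on the quandle of Example~\ref{order3}(c). Hence the contradiction truly depends on combining the two commutativity hypotheses, and the write-up should make this explicit. As a consistency check with Lemma~\ref{lemma1}, one notes that a commutative quandle of order $\ge 2$ is never trivial, so the present lemma does not contradict it.
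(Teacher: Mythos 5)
Your argument is correct, and it opens exactly as the paper's proof does: pass to the biquandle structure $\{\beta_y\}$ via Theorems~\ref{th1} and~\ref{th2}, and extract $\beta_y(x)=\beta_x(y)$ from commutativity of $\up$. Where you diverge is in the endgame. The paper expands $\beta_y(x*y)=\beta_y(x)*\beta_y(y)$ using the fact that each $\beta_y$ is a quandle \emph{automorphism}, then uses commutativity of $Q$ and right-cancellation ($R_z$ bijective) to conclude $\beta_y(y)=\beta_x(x)$ for all $x,y$, and finally invokes condition~(2) of Definition~\ref{def3} (bijectivity of $y\mapsto\beta_y(y)$) to force $|Q|\le 1$. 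You instead use commutativity of $Q$ to get $\beta_y(x*y)=\beta_x(x*y)$, flip both sides with the identity $\beta_b(a)=\beta_a(b)$ to obtain $\beta_{x*y}(y)=\beta_{x*y}(x)$, and conclude $x=y$ from injectivity of the single map $\beta_{x*y}$. Your route is marginally more economical: it needs neither the homomorphism property of the $\beta_y$ nor axiom~(2) of the biquandle structure, only that each $\beta_y$ is injective. Your closing remark that $\dow$-commutativity alone is not enough (witnessed by the constant structure $\{\mathrm{id}\}$ on a commutative quandle) is a worthwhile observation that the paper does not make explicit.
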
 
\begin{proof} Let $(X,\dow ,\up )$ be a biquandle given by a biquandle structure $\{\beta _{x}\, |\, x\in Q\}\subset Aut(Q)$ on a commutative quandle $Q$. Suppose $X$ is commutative. Then the equations $x\up y=y\up x$ imply that $\beta _{y}(x)=\beta _{x}(y)$ for every $x,y\in X$. Moreover, the equality $x\dow y=y\dow x$ implies that $\beta _{y}(x)*\beta _{y}(y)=\beta _{y}(x*y)=\beta _{x}(y*x)=\beta _{x}(y)*\beta _{x}(x)$. Since $Q$ is commutative, it follows that $\beta _{y}(y)*\beta _{y}(x)=\beta _{x}(x)*\beta _{x}(y)$ and therefore $\beta _{y}(y)=\beta _{x}(x)$ for every $x,y\in X$. Then (2) of Definition \ref{def3} implies that $X$ is of order $\leq 1$.  
\end{proof}

\begin{lemma} \label{lemma4} Let $X$ be a commutative biquandle given by a biquandle structure $\{\beta _{x}\, |\, x\in Q\}$ on a quandle $Q$. Then the automorphism $\beta _{x}\beta _{y}^{-1}\in Aut(Q)$ is of order 2 for every $x\neq y\in Q$. 
\end{lemma}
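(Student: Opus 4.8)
The plan is to extract two commutation identities among the automorphisms $\beta_{\bullet}$ from the commutativity of $X$ and condition~(1) of Definition~\ref{def3}, and then combine them.

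As already observed in the proof of Lemma~\ref{lemma3}, the commutativity of $\up$ forces $\beta_{y}(x)=\beta_{x}(y)$ for all $x,y\in Q$, since $x\up y=\beta_{y}(x)$ and $y\up x=\beta_{x}(y)$. Using Theorem~\ref{th1} to identify $\beta_{y}(x*y)=x\dow y$ and $\beta_{x}(y)=y\up x$, condition~(1) of Definition~\ref{def3} can be rewritten as
\[
\beta_{x\dow y}\circ\beta_{y}=\beta_{y\up x}\circ\beta_{x}\qquad\text{for all }x,y\in Q.
\]
I would then invoke the commutativity of $X$: setting $d:=x\dow y=y\dow x$ and $s:=x\up y=y\up x$ — both symmetric in $x$ and $y$ — this identity becomes $\beta_{d}\circ\beta_{y}=\beta_{s}\circ\beta_{x}$, and its instance with the roles of $x$ and $y$ interchanged reads $\beta_{d}\circ\beta_{x}=\beta_{s}\circ\beta_{y}$, with the same $d$ and $s$.

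Solving each of these for $\beta_{s}^{-1}\beta_{d}$ gives $\beta_{x}\beta_{y}^{-1}=\beta_{s}^{-1}\beta_{d}=\beta_{y}\beta_{x}^{-1}=(\beta_{x}\beta_{y}^{-1})^{-1}$, so $(\beta_{x}\beta_{y}^{-1})^{2}=\textrm{id}$. To see the order is exactly $2$ when $x\neq y$, observe that $\beta_{x}=\beta_{y}$ together with $\beta_{y}(x)=\beta_{x}(y)$ would give $\beta_{x}(x)=\beta_{x}(y)$, hence $x=y$ by injectivity of $\beta_{x}$; thus $\beta_{x}\beta_{y}^{-1}\neq\textrm{id}$.

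The computation is short, and I do not anticipate a genuine obstacle; the only point requiring care is the bookkeeping in translating condition~(1) of Definition~\ref{def3} into the displayed form via Theorem~\ref{th1}, and remembering that a single instance of that identity does not suffice — one must also use the version obtained by swapping $x$ and $y$ in order to cancel $\beta_{s}$ and $\beta_{d}$.
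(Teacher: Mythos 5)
Your proof is correct and follows essentially the same route as the paper: both extract $\beta_{y}(x)=\beta_{x}(y)$ from commutativity of $\up$, then combine commutativity of $\dow$ with condition~(1) of Definition~\ref{def3} to get $\beta_{x}\beta_{y}^{-1}=\beta_{y}\beta_{x}^{-1}$, and finish with the same injectivity argument to rule out the identity. The paper's version is merely terser; your explicit use of the two instances of condition~(1) (with $x$ and $y$ swapped) spells out the step the paper leaves implicit.
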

\begin{proof}The equation $x\up y=y\up x$ implies that $\beta _{y}(x)=\beta _{x}(y)$ for every $x,y\in X$. Moreover, by $x\dow y=y\dow x$ we have that $\beta _{y}(x*y)=\beta _{x}(y*x)$ and by (1) of Definition \ref{def3} it follows that $\beta _{x}\beta _{y}^{-1}=\beta _{y}\beta _{x}^{-1}=(\beta _{x}\beta _{y}^{-1})^{-1}$.  Therefore $(\beta _{x}\beta _{y}^{-1})^{2}=\textrm{id}$. If $\beta _{a}\beta _{b}^{-1}=\textrm{id}$ for some $a,b\in X$, then the equations $\beta _{a}(a)=\beta _{b}(a)=\beta _{a}(b)$ imply that $a=b$. 
\end{proof}

\section{Coloring invariants of links}
\label{sec3}

An important motivation behind the study of quandle-like structures lies in their natural connection with knot theory. In this Section we investigate the relationship between the quandle and biquandle coloring invariant.\\
 
Let $D_{L}$ be an oriented link diagram of a (classical or virtual) link $L$. We denote the set of arcs and the set of crossings of $D_{L}$ by by $A(D_{L})$ and $C(D_{L})$, respectively. Figure \ref{fig:crossingQ} depicts the \textbf{quandle crossing relation} at a crossing of the diagram $D_{L}$.

\begin{figure}[h]
\labellist
\normalsize \hair 2pt
\pinlabel $x$ at 10 150
\pinlabel $y$ at 10 40
\pinlabel $x*y$ at 200 40
\endlabellist
\begin{center}
\includegraphics[scale=0.30]{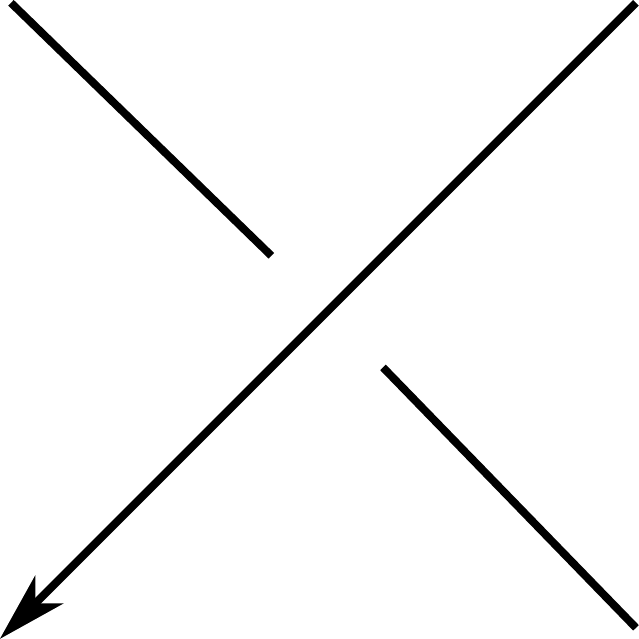}
\caption{The quandle crossing relation}
\label{fig:crossingQ}
\end{center}
\end{figure}

The \textbf{fundamental quandle} of the link $L$ is the quandle $Q(L)$ given by the quandle presentation $$\left \langle A(D_{L})\, |\, \textrm{quandle crossing relation at every $c\in C(D_{L})$}\right \rangle$$ It is easy to see that two diagrams of the same link yield equivalent presentations and thus the fundamental quandle defines a link invariant. For more details, we refer the reader to \cite{FR}. \\

Considering the link diagram $D_{L}$ as a 4-valent graph, every arc is divided into two semiarcs that are incident at a vertex of the graph. We denote the set of semiarcs of $D_{L}$ by $S(D_{L}).$ Figure \ref{fig:crossingB} depicts the \textbf{biquandle crossing relations} at a (positive or negative) crossing of the diagram $D_{L}$. The \textbf{fundamental biquandle} of the link $L$ is the biquandle $B(L)$ given by the biquandle presentation $$\left \langle S(D_{L})\, |\, \textrm{biquandle crossing relations for every $c\in C(D_{L})$}\right \rangle$$ It is well-known that the fundamental biquandle of a classical or virtual link does not depend on the choice of a particular link diagram and thus defines a link invariant \cite{FJK}.

\begin{figure}[h]
\labellist
\normalsize \hair 2pt
\pinlabel $x$ at 10 150
\pinlabel $y$ at 10 40
\pinlabel $x\dow y$ at 190 40
\pinlabel $y\up x$ at 190 150
\pinlabel $y$ at 370 150
\pinlabel $x$ at 370 40
\pinlabel $y\up x$ at 560 40
\pinlabel $x\dow y$ at 560 150
\endlabellist
\begin{center}
\includegraphics[scale=0.30]{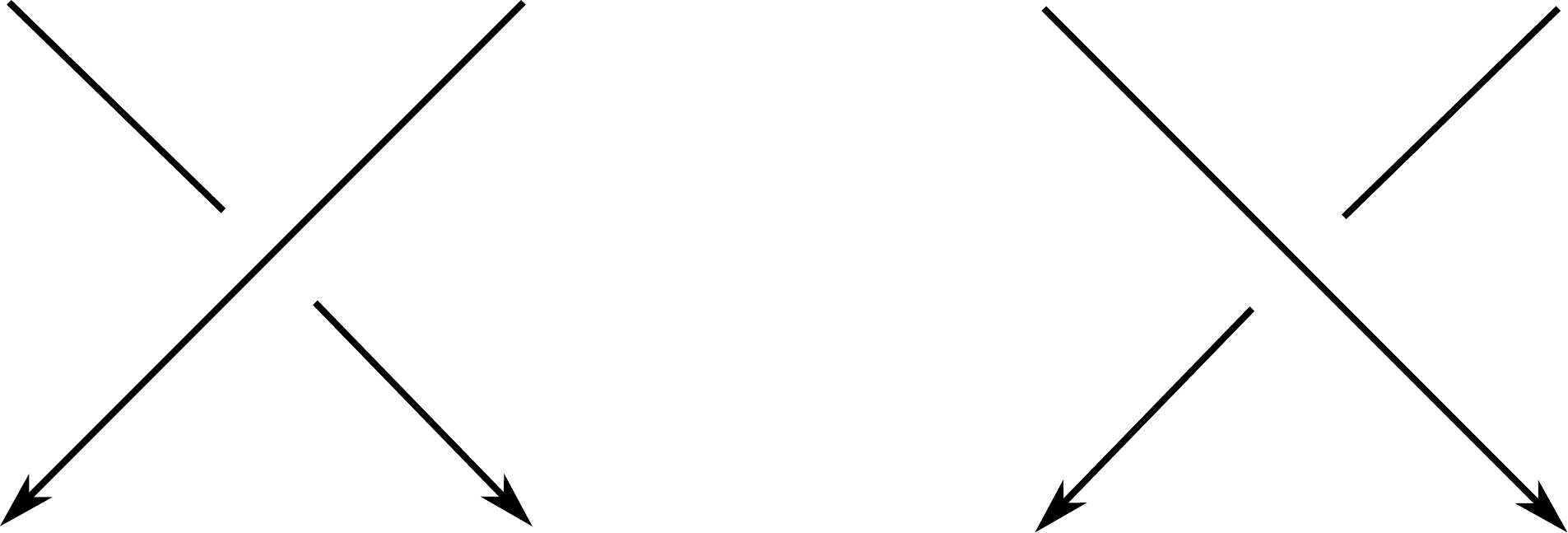}
\caption{Biquandle crossing relations}
\label{fig:crossingB}
\end{center}
\end{figure}
Fundamental (bi)quandles of links are often compared by representations into finite (bi)quandles. We denote the set of quandle (respectively biquandle) homomorphisms between the quandles (respectively biquandles) $X$ and $Y$ by 
$\hoq (X,Y)$ (respectively $\ho (X,Y)$).
\begin{definition} \label{def6} Let $Y$ be a finite quandle. The cardinality $\Phi ^{Y}_{Q}(L)=|\hoq (Q(L),Y)|$ is called the \textbf{quandle coloring invariant} of the link $L$ with respect to $Y$. For a finite biquandle $Z$, the cardinality $\Phi ^{Z}_{B}(L)=|\ho (B(L),Z)|$ is called the \textbf{biquandle coloring invariant} of the link $L$ with respect to $Z$. 
\end{definition}

\begin{example} \label{ex1} Let $Y$ be the quandle of order 4 with operation table $\begin{bmatrix} 
1 & 3 & 4 & 2\\
4 & 2 & 1 & 3\\
2 & 4 & 3 & 1\\
3 & 1 & 2 & 4\\
\end{bmatrix}$. This quandle $Y$ admits 9 nonisomorphic biquandle structures $s_{1},\ldots ,s_{9}$. We will denote the biquandle corresponding to the biquandle structure $s_{i}$ by $Y_i$.  The coloring invariants of some knots with respect to $Y$ and $Y_i$ are listed in the table below. We use the standard knot enumeration from the Knot atlas \cite{Katlas}.
\begin{center}
 \begin{tabular}{| m{2cm} | m{2cm} | m{5cm} |}  
 \hline  \vspace{3 mm}
 Knot & $\Phi _{Q}^{Y}$ & $(\Phi _{B}^{Y_{1}},\ldots ,\Phi _{B}^{Y_{9}})$ \\ [0.5ex] 
 \hline\hline
 $4_1$ & $16$ & $(16,16,4,4,4,4,0,5,4)$\\
 \hline
 $5_1$ & $4$ & $(4,4,4,4,1,1,0,2,0)$\\
 \hline
 $5_2$ & $4$ & $(4, 4, 4, 4, 4, 4, 4, 5, 4)$\\
 \hline
 $6_1$ & $4$ & $(4, 4, 4, 4, 1, 1, 0, 3, 0)$\\
 \hline
 $6_2$ & $4$ & $(4,4,4,4,4,4,4,4,4)$\\
 \hline
 $6_3$ & $4$ & $(4,4,4,4,4,4,4,5,4)$\\
 \hline
\end{tabular}
\end{center}
Observe that the quandle coloring invariant with respect to $Y$ takes the same value for nearly all knots in the table. Also, the biquandle coloring invariant with respect to any one of the biquandles $Y_1,\ldots ,Y_6$ is not very effective in distinguishing knots. The tuple of invariants $(\Phi _{B}^{Y_{1}},\ldots ,\Phi _{B}^{Y_{9}})$, however, is able to distinguish all but two of the knots under consideration.
\end{example}

\begin{example} \label{ex2} Let $Y$ again be the quandle of order 4 from Example \ref{ex1}. The coloring invariants of all 3-crossing virtual knots with respect to $Y$ and $Y_i$ are listed in the table below. The knot enumeration is taken from the Table of Virtual Knots \cite{VKnot}.
\begin{center}
 \begin{tabular}{| m{2cm} | m{2cm} | m{6cm} |}  
 \hline  \vspace{3 mm}
 Virtual knot & $\Phi _{Q}^{Y}$ & $(\Phi _{B}^{Y_{1}},\ldots ,\Phi _{B}^{Y_{9}})$ \\ [0.5ex] 
 \hline\hline
 $3_1$ & $4$ & $(4,4,4,4,1,1,4,6,0)$\\
 \hline
 $3_2$ & $4$ & $(4,4,4,4,1,1,0,4,0)$\\
 \hline
 $3_3$ & $4$ & $(4,4,4,4,4,4,0,3,0)$\\
 \hline
 $3_4$ & $4$ & $(4,4,4,4,1,1,0, 3, 0)$\\
 \hline
 $3_5$ & $4$ & $(4,4,4,4,1,1,4,4,4)$\\
 \hline
$3_6$ & $16$ & $(16,16,16,16,16,16,16,16,16)$\\
\hline
$3_7$ & $4$ & $(4,4,4,4,1,1,4,12,4)$\\
\hline
\end{tabular}
\end{center}
Observe that in contrast with the ordinary quandle and biquandle coloring invariants, the tuple of biquandle coloring invariants $(\Phi _{B}^{Y_{1}},\ldots ,\Phi _{B}^{Y_{9}})$ is able to distinguish all virtual knots in the table.  All computations were performed using \verb|Python|. Our code is available for interested readers upon request. 
\end{example}

The above examples indicate how the richness of biquandle structures on a given quandle may improve the strength of (bi)quandle representation invariants. This lays ground for a new coloring invariant.

\begin{definition} \label{def7} Let $Y$ be a finite quandle that admits $k$ nonisomorphic biquandle structures $s_{1},\ldots ,s_{k}$. Denote by $Y_i$ the biquandle correponding to the biquandle structure $s_{i}$ on $Y$. The $k$-tuple $(\Phi ^{Y_{1}}_{B}(L),\ldots ,\Phi ^{Y_{k}}_{B}(L))$ is called the \textbf{biquandle structure coloring invariant} of the link $L$ with respect to the quandle $Y$.  
\end{definition}

It is clear that the biquandle structure coloring invariant represents an enhancement of both the quandle and biquandle coloring invariants, and thus offers a new way of distinguishing links.

\section{Hom - biquandles}
\label{sec4}

In Section \ref{sec3}, we discussed representations of the fundamental (bi)quandle of a link into finite (bi)quandles. The biquandle coloring invariant of a link $L$ is defined by the cardinality of the homomorphism set $\ho (B(L),Y)$ for a finite biquandle $Y$. It turns out that for suitable choices of the target biquandle $Y$, this set allows an additional structure. 

For two biquandles $X$ and $Y,$ we can endow the morphism set $$\ho (X,Y)=\{f\, |\, f\colon X\to Y\textrm{ is a biquandle homomorphism}\}$$ with two operations $\dow ,\up \colon \ho (X,Y)\times \ho (X,Y)\to \ho (X,Y)$ defined by $(f\dow g)(x)=f(x)\dow g(x)$ and $(f\up g)(x)=f(x)\up g(x)$. A natural question arises as to whether these two operations define a biquandle. The following result has already been established in \cite{CR1}. 

\begin{proposition}\label{prop2} Let $X$ and $Y$ be biquandles. If $Y$ is medial, then $(\ho (X,Y),\dow ,\up )$ is a medial biquandle.
\end{proposition}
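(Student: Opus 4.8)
The plan is to verify directly that $(\ho(X,Y),\dow,\up)$ satisfies the three clauses of Definition \ref{def2}, using the mediality of $Y$ at exactly the points where the ``mixed'' elements $f(x), g(x)$ interact. For the first axiom, given $f\in\ho(X,Y)$ I would compute $(f\dow f)(x)=f(x)\dow f(x)=f(x)\up f(x)=(f\up f)(x)$ pointwise, using the idempotency-type axiom $a\dow a=a\up a$ in $Y$; this is immediate and requires no mediality.

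The substantive work is the exchange laws. Here is where I would spend the effort: to check, say, $(f\dow g)\dow(h\dow g)=(f\dow g')\dow\cdots$ — wait, more precisely the biquandle exchange laws of Definition \ref{def2} each involve three arguments, so I would take $f,g,h\in\ho(X,Y)$, evaluate both sides at an arbitrary $x\in X$, and reduce to the corresponding identity in $Y$ among the six elements $f(x),g(x),h(x)$. Since $Y$ is an honest biquandle, the ordinary exchange laws already hold in $Y$, so in fact the three exchange laws for $\ho(X,Y)$ follow from the exchange laws in $Y$ \emph{without} invoking mediality. What mediality is actually needed for is the bijectivity clause and, more importantly, to guarantee that $f\dow g$ and $f\up g$ are themselves biquandle homomorphisms $X\to Y$ — this is the crux. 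To see $f\dow g\in\ho(X,Y)$ one must check $(f\dow g)(x\dow y)=(f\dow g)(x)\dow(f\dow g)(y)$, i.e. $f(x\dow y)\dow g(x\dow y)=(f(x)\dow g(x))\dow(f(y)\dow g(y))$, which after using that $f,g$ are homomorphisms becomes $(f(x)\dow f(y))\dow(g(x)\dow g(y))=(f(x)\dow g(x))\dow(f(y)\dow g(y))$ — precisely the first medial identity of Definition \ref{def5} applied in $Y$. The analogous computations for $f\dow g$ on $\up$, and for $f\up g$ on both operations, use the other two medial identities. So closure of the operations is where mediality enters, and I expect this to be the main obstacle in the sense that it is the step one must not overlook; the verification itself is a routine substitution once the bookkeeping is set up.

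Finally, for the bijectivity clause I would argue as follows. Fix $g\in\ho(X,Y)$; the map $\alpha_g\colon f\mapsto f\dow g$ on $\ho(X,Y)$ has pointwise inverse given by $h\mapsto (x\mapsto \alpha_{g(x)}^{-1}(h(x)))$, where $\alpha_{g(x)}^{-1}$ is the inverse of the bijection $a\mapsto a\dow g(x)$ in $Y$; one must check this candidate inverse again lands in $\ho(X,Y)$, which once more uses mediality of $Y$ (now in the form that the inverse operations are medial as well — equivalently, apply the medial identities to $S^{-1}$ and the $\alpha_y^{-1},\beta_y^{-1}$). The same works for $\beta_g$, and for the sideways map $S$ on $\ho(X,Y)\times\ho(X,Y)$ one checks $S(f,g)=(g\up f, f\dow g)$ is a bijection by composing the pointwise $S$-maps in $Y$. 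Assembling these three verifications — first axiom, exchange laws, bijectivities — and recording that closure of $\dow,\up$ on the homomorphism set is exactly the content of $Y$ being medial, completes the proof; moreover the same medial identities, now read for the resulting operations on $\ho(X,Y)$, show the new biquandle is itself medial.
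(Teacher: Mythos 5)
Your proposal is correct and follows essentially the same route as the paper: the first and third biquandle axioms are checked pointwise from the corresponding axioms of $Y$, and the bijectivity of $\alpha _{g}$, $\beta _{g}$ and $S$ is established by constructing preimages pointwise in $Y$ and then using mediality to verify that these pointwise-defined maps are again biquandle homomorphisms (the paper carries this out via a cancellation argument, e.g.\ deducing $g(x\dow y)=g(x)\dow g(y)$ from $g(x\dow y)\dow f(x\dow y)=(g(x)\dow g(y))\dow f(x\dow y)$, rather than by appealing to mediality of the inverse operations, but the mechanism is the same). You additionally make explicit the closure step --- that $f\dow g$ and $f\up g$ are themselves homomorphisms, which is where mediality first enters --- a point the paper's proof leaves implicit.
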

\begin{proof} Since $Y$ is a biquandle, it is easy to see that the pointwise operations on $\ho (X,Y)$ will always satisfy the first and third biquandle axioms from Definition \ref{def2}.

To show that $(\ho (X,Y), \dow ,\up )$ satisfies the second biquandle axiom, first consider the map $\alpha _{f}\colon \ho (X,Y)\to \ho (X,Y)$ given by $\alpha _{f}(g)=g\dow f$. We need to show that $\alpha _{f}$ is invertible. Let $h\in \ho (X,Y)$. Since $Y$ is a biquandle, for every $x\in X$ there exists a $g(x)\in Y$ such that $h(x)=g(x)\dow f(x)$. This defines a mapping $g\colon X\to Y$. Since $f$ and $h$ are biquandle homomorphisms and $Y$ is medial, we compute
\begin{eqnarray*}
 g(x\dow y)\dow f(x\dow y) = h(x\dow y) = h(x)\dow h(y) &=& \left( g(x)\dow f(x) \right)\dow \left( g(y)\dow f(y) \right) \\ &=& \left (g(x)\dow g(y)\right ) \dow \left (f(x)\dow f(y)\right ) \\
& = & \left (g(x)\dow g(y)\right )\dow f(x\dow y) \\ & \Rightarrow & g(x\dow y)=g(x)\dow g(y)
\end{eqnarray*}
\begin{eqnarray*}
 g(x\up y)\dow f(x\up y) = h(x\up y) = h(x)\up h(y) &=& \left (g(x)\dow f(x)\right )\up \left (g(y)\dow f(y)\right ) \\
 &=& \left (g(x)\up g(y)\right )\dow \left (f(x)\up f(y)\right ) \\
 &=& \left (g(x)\up g(y)\right )\dow f(x\up y) \\ & \Rightarrow & g(x\up y)=g(x)\up g(y)
\end{eqnarray*} 
and it follows that $g$ is a biquandle homomorphism for which $\alpha _{f}(g)=h$. 

Secondly, consider the map $\beta _{f}\colon \ho (X,Y)\to \ho (X,Y)$ given by $\beta _{f}(g)=g\up f$. To show that $\beta _{f}$ is invertible, choose $h\in \ho (X,Y)$. Since $Y$ is a biquandle, for every $x\in X$ there exists $g(x)\in Y$ such that $h(x)=g(x)\up f(x)$. This defines a mapping $g\colon X\to Y$. Using the mediality of $Y$, we compute
\begin{eqnarray*}
 g(x\dow y)\up f(x\dow y) = h(x\dow y) = h(x)\dow h(y)
 &=& \left (g(x)\up f(x)\right )\dow \left (g(y)\up f(y)\right ) \\
 &=&\left (g(x)\dow g(y)\right )\up \left (f(x)\dow f(y)\right ) \\
&=& \left (g(x)\dow g(y)\right )\up f(x\dow y) \\
& \Rightarrow & g(x\dow y)=g(x)\dow g(y) 
\end{eqnarray*}
\begin{eqnarray*}
g(x\up y)\up f(x\up y) = h(x\up y) = h(x)\up h(y) &=& \left (g(x)\up f(x)\right )\up \left (g(y)\up f(y)\right ) \\
&=& \left (g(x)\up g(y)\right )\up \left (f(x)\up f(y)\right ) \\
&=& \left (g(x)\up g(y)\right )\up f(x\up y) \\
& \Rightarrow & g(x\up y) = g(x)\up g(y)
\end{eqnarray*} and thus $g$ is a biquandle homomorphism for which $\beta _{f}(g)=h$. 

Thirdly, consider the map $S\colon \ho (X,Y)\times \ho (X,Y)\to \ho (X,Y)\times \ho (X,Y)$. Let $h,k\in \ho (X,Y)$. Since $Y$ is a biquandle, for any $x\in X$ there exist two elements $f(x),g(x)\in Y$ such that $(h(x),k(x))=S(f(x),g(x))=\left (g(x)\up f(x),f(x)\dow g(x)\right )$. We need to show that the maps $f,g\colon X\to Y$ are biquandle homomorphisms. We have
\begin{xalignat*}{1}
& g(x\dow y)\up f(x\dow y)=h(x\dow y)=h(x)\dow h(y)=(g(x)\up f(x))\dow (g(y)\up f(y))=(g(x)\dow g(y))\up (f(x)\dow f(y))\\
& f(x\dow y)\dow g(x\dow y)=k(x\dow y)=k(x)\dow k(y)=(f(x)\dow g(x))\dow (f(y)\dow g(y))=(f(x)\dow f(y))\dow (g(x)\dow g(y))
\end{xalignat*} The obtained equalities imply that $S\left (f(x\dow y),g(x\dow y)\right )=S\left (f(x)\dow f(y),g(x)\dow g(y)\right )$ and since $S\colon Y\times Y\to Y\times Y$ is invertible, it follows that $f(x\dow y)=f(x)\dow f(y)$ and $g(x\dow y)=g(x)\dow g(y)$. Similarly, the equalities
\begin{xalignat*}{1}
& g(x\up y)\up f(x\up y)=h(x\up y)=h(x)\up h(y)=(g(x)\up f(x))\up (g(y)\up f(y))=(g(x)\up g(y))\up (f(x)\up f(y))\\
& f(x\up y)\dow g(x\up y)=k(x\up y)=k(x)\up k(y)=(f(x)\dow g(x))\up (f(y)\dow g(y))=(f(x)\up f(y))\dow (g(x)\up g(y))
\end{xalignat*} imply that $S\left (f(x\up y),g(x\up y)\right )=S\left (f(x)\up f(y),g(x)\up g(y)\right )$, and thus $f(x\up y)=f(x)\up f(y)$ and $g(x\up y)=g(x)\up g(y)$. We have therefore shown that $f,g\in \ho (X,Y)$ and $S(f,g)=(h,k)$. 

Thus, the mediality of $Y$ implies the mediality of $\ho (X,Y)$ with the pointwise operations.
\end{proof}

\begin{definition}For biquandles $X$ and $Y$ where $Y$ is medial, the biquandle $(\ho (X,Y),\dow ,\up )$ will be called the \textbf{Hom-biquandle} and denoted by $\Ho (X,Y)$.
\end{definition}

Similarly, if $Q_1$ is a quandle and $Q_2$ is a medial quandle, the set of quandle homomorphisms $$\hoq (Q_1,Q_2)=\{f\colon Q_1\to Q_2\, |\, \textrm{$f$ is a quandle homomorphism}\}$$ forms a quandle with the operation $(f*g)(x)=f(x)*g(x)$ for every $x\in Q_1$ \cite{CR1}. This quandle is called the \textbf{Hom-quandle} and will be denoted by $\Hoq (X,Y)$. Structure and properties of Hom-quandles were studied in \cite{CR1},\cite{CR2}. \\

As we have seen, every biquandle arises by imposing a biquandle structure on its associated quandle. A natural question is:  What is the associated quandle of the Hom-biquandle $\Ho (X,Y)$? As a set, $\q (\Ho (X,Y))=\ho (X,Y)$ is the set of biquandle homomorphisms. The quandle operation is given by $$(f*g)(x)=((f\dow g)\up ^{-1}g)(x)=(f(x)\dow g(x))\up ^{-1}g(x)=f(x)*g(x)$$ where $*$ on the right-hand side denotes the quandle operation on $\q (Y)$. It follows that $\q (\Ho (X,Y))$ is a subset of  $\Hoq (\q (X),\q (Y))$ -- the associated quandle of the Hom-biquandle is a subquandle of the Hom-quandle of associated quandles. The characterization of this subquandle is given below. 

\begin{proposition} \label{prop3} Let $(Q,*)$ and $(K,\tr )$ be quandles. Suppose $\{\alpha _{x}|\, x\in Q\}\subset Aut(Q)$ is a biquandle structure defining a biquandle $X$ and $\{\beta _{y}|\, y\in K\}\subset Aut(K)$ is a biquandle structure defining a biquandle $Y$. A quandle homomorphism $f\colon Q\to K$ lifts to a biquandle homomorphism $\widetilde{f}\colon X\to Y$ if and only if $f\alpha _{x}=\beta _{f(x)}f$ for every $x\in Q$. 
\end{proposition}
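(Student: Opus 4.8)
The plan is to make both biquandle structures explicit via Theorem~\ref{th1} and then observe that compatibility of $f$ with the $\up$-operations is \emph{exactly} the asserted conjugation identity, while compatibility with the $\dow$-operations comes for free once we use that $f$ is a quandle homomorphism.

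First I would record the operations. By Theorem~\ref{th1}, the biquandle $X$ has underlying set $Q$ with $x\dow y=\alpha_{y}(x*y)$ and $x\up y=\alpha_{y}(x)$, while $Y$ has underlying set $K$ with $u\dow v=\beta_{v}(u\tr v)$ and $u\up v=\beta_{v}(u)$; moreover $\q(X)=(Q,*)$ and $\q(Y)=(K,\tr)$. Since the functor $\q$ leaves underlying sets and underlying set maps of morphisms unchanged, saying that $f$ \emph{lifts} to a biquandle homomorphism $\widetilde{f}\colon X\to Y$ (i.e.\ that $\q(\widetilde{f})=f$) forces $\widetilde{f}$ to coincide with $f$ as a set map, so the content of the statement is precisely that the map $f\colon Q\to K$ satisfies $f(x\dow y)=f(x)\dow f(y)$ and $f(x\up y)=f(x)\up f(y)$ for all $x,y\in Q$.

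For the ``only if'' direction I would simply specialize the $\up$-compatibility of $f$: for all $x,y\in Q$ we have $f(\alpha_{y}(x))=f(x\up y)=f(x)\up f(y)=\beta_{f(y)}(f(x))$, which (after renaming the bound variable) is exactly $f\alpha_{x}=\beta_{f(x)}f$. For the ``if'' direction, assume $f\alpha_{x}=\beta_{f(x)}f$ for every $x\in Q$. The $\up$-law then holds by the same chain of equalities read in reverse, and for the $\dow$-law I would compute
\[
f(x\dow y)=f\bigl(\alpha_{y}(x*y)\bigr)=\beta_{f(y)}\bigl(f(x*y)\bigr)=\beta_{f(y)}\bigl(f(x)\tr f(y)\bigr)=f(x)\dow f(y),
\]
where the second equality uses the hypothesis and the third uses that $f$ is a quandle homomorphism $(Q,*)\to(K,\tr)$.

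I do not anticipate a genuine obstacle here; the one point worth flagging is the asymmetry that lets a single condition suffice, namely that $\dow$-compatibility is not an extra constraint but a consequence of $\up$-compatibility together with the fact that $f$ already respects the associated quandle operations. The only other bookkeeping to be careful about is the identification in the second paragraph, that ``lifting'' pins down $\widetilde{f}$ to be $f$ on underlying sets, so that the equivalence really is a statement about the single map $f$.
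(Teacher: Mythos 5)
Your proof is correct and follows essentially the same route as the paper, which likewise writes out both biquandle operations via Theorem~\ref{th1} and reduces compatibility with $\up$ and $\dow$ to the single condition $f\alpha_{x}=\beta_{f(x)}f$ using that $f$ is a quandle homomorphism. Your version merely makes explicit the logical bookkeeping (that $\widetilde{f}=f$ as a set map, and that the $\dow$-law is a consequence rather than an extra constraint) that the paper leaves to the reader.
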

\begin{proof}For any $z,x\in X$ we have 
\begin{xalignat*}{1}
& \widetilde{f}(z\dow x)=f(\alpha _{x}(z*x))\quad \textrm{and} \quad \widetilde{f}(z)\dow \widetilde{f}(x)=\beta _{f(x)}(f(z)\tr f(x))=\beta _{f(x)}(f(z*x))\\
& \widetilde{f}(z\up x)=f(\alpha _{x}(z))\quad \textrm{and}\quad \widetilde{f}(z)\up \widetilde{f}(x)=\beta _{f(x)}(f(z))
\end{xalignat*}
\end{proof}

\begin{proposition}\label{prop4} Let $X$ be a biquandle defined by a biquandle structure $\{\alpha _{x}|\, x\in \q (X)\}$. Let $Y$ be a medial biquandle, defined by a biquandle structure $\{\beta _{y}|\, y\in \q (Y)\}$.  Then $\q (\Ho (X,Y))$ is the subquandle $$\left \{f\in \Hoq (\q (X),\q (Y))\, |\, f\alpha _{x}=\beta _{f(x)}f \textrm{ for every $x\in \q (X)$}\right \}\;$$ The biquandle structure of $\Ho (X,Y)$  is given by $\{\beta _{g}^{*}\,|\, g\in \ho (X,Y)\}$. 
\end{proposition}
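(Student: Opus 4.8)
The plan is to obtain both assertions by combining three facts already in hand: the functoriality of $\q$, Proposition~\ref{prop3}, and Theorem~\ref{th2} applied to the biquandle $\Ho(X,Y)$. Throughout I use that applying $\q$ to a biquandle, or to a biquandle homomorphism, changes neither underlying sets nor underlying set maps; in particular a ``lift'' of a quandle homomorphism $f$ to a biquandle homomorphism, if it exists, is $f$ itself as a set map, hence unique.

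\emph{The underlying quandle.} First I would note that a biquandle homomorphism $h\colon X\to Y$, viewed as a set map, coincides with the quandle homomorphism $\q(h)\colon\q(X)\to\q(Y)$; thus $h\in\hoq(\q(X),\q(Y))$ and it lifts to the biquandle homomorphism $h$ itself, so by Proposition~\ref{prop3} it satisfies $h\alpha_x=\beta_{h(x)}h$ for every $x$. Conversely, any $f\in\hoq(\q(X),\q(Y))$ with $f\alpha_x=\beta_{f(x)}f$ lifts, by Proposition~\ref{prop3}, to a biquandle homomorphism, which is $f$ itself. Hence, as sets,
$$\q(\Ho(X,Y))=\ho(X,Y)=\bigl\{f\in\hoq(\q(X),\q(Y))\ \big|\ f\alpha_x=\beta_{f(x)}f\ \text{for every }x\in\q(X)\bigr\}.$$
As computed in the paragraph preceding the statement, the associated-quandle operation on $\Ho(X,Y)$ is $(f*g)(x)=f(x)*g(x)$ with $*$ the operation of $\q(Y)$, which is exactly the operation of $\hoq(\q(X),\q(Y))$ restricted to this subset. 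So the displayed subset is closed under that operation and the set-level equality above is an equality of quandles; in particular this re-establishes that we are looking at a genuine subquandle.

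\emph{The biquandle structure.} By Theorem~\ref{th2} applied to the biquandle $\Ho(X,Y)=(\ho(X,Y),\dow,\up)$, the family of maps $\beta_g^{*}\colon\ho(X,Y)\to\ho(X,Y)$, $g\in\ho(X,Y)$, defined by $\beta_g^{*}(f)=f\up g$, is a biquandle structure on $\q(\Ho(X,Y))$. Unwinding $\up$ pointwise and invoking Theorem~\ref{th1} for $Y$ (so that $a\up b=\beta_b(a)$ in $Y$) gives $(\beta_g^{*}(f))(x)=f(x)\up g(x)=\beta_{g(x)}(f(x))$; thus $\beta_g^{*}$ is precisely the map applying $\beta_{g(x)}$ in the coordinate $x$, which justifies the notation. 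Theorem~\ref{th2} simultaneously certifies that each $\beta_g^{*}$ is a quandle automorphism of $\q(\Ho(X,Y))$ and that conditions (1)--(2) of Definition~\ref{def3} hold, so no separate verification is needed.

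\emph{Expected obstacle.} There is no deep difficulty here: mediality of $Y$ has already been spent, in Proposition~\ref{prop2}, to know that $\Ho(X,Y)$ is a biquandle at all, and the rest is bookkeeping. The one place to be careful is notational — distinguishing the biquandle structure $\{\beta_y\}$ of $Y$ from the generic structure maps of $\Ho(X,Y)$ supplied by Theorem~\ref{th2}, and matching the pointwise formulas against the conventions $a\dow b=\beta_b(a*b)$ and $a\up b=\beta_b(a)$ fixed in Theorem~\ref{th1}.
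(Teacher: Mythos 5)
Your proof is correct and takes essentially the same route as the paper: the first statement is Proposition~\ref{prop3} combined with the pointwise computation of the associated-quandle operation from the discussion preceding the Proposition, and the second statement reduces to the pointwise identities $(f\up g)(x)=\beta_{g(x)}(f(x))$ and $(f\dow g)(x)=\beta_{g(x)}(f(x)*g(x))$, which is exactly what the paper records (you merely make the implicit appeal to Theorem~\ref{th2} explicit).
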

\begin{proof} The first statement follows directly from Proposition \ref{prop3} together with the discussion preceding the Proposition. For the second statement, observe that 
\begin{xalignat*}{1}
& (f\dow g)(x)=f(x)\dow g(x)=\beta _{g(x)}(f(x)*g(x)) \; \textrm{and}\\
& (f\up g)(x)=f(x)\up g(x)=\beta _{g(x)}(f(x))
\end{xalignat*} for any $f,g\in \ho (X,Y)$ and $x\in X$. 
\end{proof}

\begin{example} \label{ex3} Consider nonisomorphic biquandles of order 3, listed in Example \ref{order3}. Table \ref{tab1} lists cardinalities of the Hom-biquandle for every pair of biquandles of order 3. We denote by $A_{i}$ (respectively $B_{i}$ or $C_{i}$) the biquandle, corresponding to the $i$-th biquandle structure in Example \ref{order3} (a) (resp. (b) or (c)). Compare this to Table \ref{tab2} that lists cardinalities of the Hom-quandle of associated quandles. 

\begin{table}[h!]
\begin{center}
 \begin{tabular}{| m{1cm} || c | c | c | c | c || c | c | c | c || c | c | c | c | c | c ||}  
 \hline 
 $X\backslash Y$ & $A_1$ & $A_2$ & $A_3$ & $A_4$ & $A_5$ & $B_1$ & $B_2$ & $B_3$ & $B_4$ & $C_1$ & $C_2$ & $C_3$ & $C_4$ & $C_5$ & $C_6$ \\ [0.5ex] 
 \hline
 $A_1$ & $27$ & $17$ & $9$ & $9$ & $0$ & $9$ & $1$ & $9$ & $1$ & $3$ & $1$ & $1$ & $3$ & $0$ & $0$ \\
 \hline
 $A_2$ & $9$ & $9$ & $3$ & $3$ & $0$ & $5$ & $1$ & $5$ & $1$ & $3$ & $1$ & $1$ & $3$ & $0$ & $0$ \\
 \hline
 $A_3$ & $27$ & $17$ & $9$ & $9$ & $0$ & $9$ & $1$ & $9$ & $1$ & $3$ & $1$ & $1$ & $3$ & $0$ & $0$ \\
 \hline
 $A_4$ & $27$ & $17$ & $9$ & $9$ & $0$ & $9$ & $1$ & $9$ & $1$ & $3$ & $1$ & $1$ & $3$ & $0$ & $0$ \\
 \hline
 $A_5$ & $9$ & $7$ & $7$ & $9$ & $9$ & $5$ & $5$ & $5$ & $5$ & $3$ & $1$ & $1$ & $3$ & $0$ & $0$ \\
 \hline
 \hline
 $B_1$ & $9$ & $7$ & $3$ & $3$ & $0$ & $7$ & $3$ & $7$ & $3$ & $3$ & $1$ & $1$ & $3$ & $0$ & $0$ \\
 \hline
 $B_2$ & $9$ & $7$ & $3$ & $3$ & $0$ & $7$ & $3$ & $7$ & $3$ & $3$ & $1$ & $1$ & $3$ & $0$ & $0$ \\
 \hline
 $B_3$ & $9$ & $7$ & $3$ & $3$ & $0$ & $7$ & $3$ & $7$ & $3$ & $3$ & $1$ & $1$ & $3$ & $0$ & $0$ \\
 \hline
 $B_4$ & $9$ & $7$ & $3$ & $3$ & $0$ & $7$ & $3$ & $7$ & $3$ & $3$ & $1$ & $1$ & $3$ & $0$ & $0$ \\
 \hline
 \hline
 $C_1$ & $3$ & $3$ & $1$ & $1$ & $0$ & $3$ & $1$ & $3$ & $1$ & $9$ & $1$ & $3$ & $3$ & $0$ & $0$ \\
 \hline
 $C_2$ & $3$ & $3$ & $1$ & $1$ & $0$ & $3$ & $1$ & $3$ & $1$ & $3$ & $3$ & $1$ & $3$ & $0$ & $0$ \\
 \hline
 $C_3$ & $3$ & $3$ & $1$ & $1$ & $0$ & $3$ & $1$ & $3$ & $1$ & $9$ & $1$ & $3$ & $3$ & $0$ & $0$ \\
 \hline
 $C_4$ & $3$ & $3$ & $1$ & $1$ & $0$ & $3$ & $1$ & $3$ & $1$ & $3$ & $1$ & $1$ & $9$ & $0$ & $0$ \\
 \hline
 $C_5$ & $3$ & $3$ & $1$ & $1$ & $0$ & $3$ & $1$ & $3$ & $1$ & $3$ & $1$ & $1$ & $3$ & $3$ & $0$ \\
 \hline
 $C_6$ & $3$ & $3$ & $1$ & $1$ & $0$ & $3$ & $1$ & $3$ & $1$ & $3$ & $1$ & $3$ & $3$ & $0$ & $3$ \\
 \hline
\end{tabular}
\bigskip
\caption{Cardinalities of $\ho (X,Y)$.\label{tab1}}
\end{center}
\end{table}
Table \ref{tab1} shows that the Hom-biquandles $\Ho (B_2,B_2)$ and $\Ho (B_2,A_3)$ share the same order. Calculation reveals that $\ho (B_2,B_2)=\{(1,1,1),(1,2,3),(1,3,2)\}$ and $\ho (B_2,A_3)=\{(1,1,1),(1,2,2),(1,3,3)\}$. Taking into account the biquandle operations, it is easy to check that $\Ho (B_2,B_2)\cong B_2$ and $\Ho (B_2,A_3)\cong A_3$, thus the Hom-biquandles are not isomorphic. 
\begin{table}[h!]
\begin{center}
 \begin{tabular}{| m{1.5cm} || c || c || c |}  
 \hline  
 $Q_{1}\backslash Q_{2}$ & $\q (A_{i})$ & $\q (B_{i})$ & $\q (C_{i})$ \\ [0.5ex] 
 \hline
  \hline
 $\q (A_{i})$ & $27$ & $9$ & $3$ \\
 \hline
  \hline
 $\q (B_{i})$ & $9$ & $7$ & $3$ \\
 \hline
  \hline
 $\q (C_{i})$ & $3$ & $3$ & $9$ \\
 \hline
\end{tabular}
\bigskip
\caption{Cardinalities of $\hoq (Q_{1},Q_{2})$.\label{tab2}}
\end{center}
\end{table}

\end{example}

By Proposition \ref{prop4}, the associated quandle of $\Ho (X,Y)$ depends on the biquandle structures of both biquandles $X$ and $Y$. The biquandle structure of the Hom-biquandle, however, is determined solely by the biquandle structure of $Y$. This fact is reflected in Lemma \ref{lemma5} and Proposition \ref{prop5}. Recall that a biquandle $Y$ is called \textbf{involutory} if the equalities 
\begin{xalignat*}{3}
& x\dow (y\up x)=x\dow y\,, & x\up (y\dow x)=x\up y\,, \quad \quad & (x\dow y)\dow y=x\quad \quad \textrm{ and } & (x\up y )\up y=x
\end{xalignat*} hold for every $x,y\in Y$. 

\begin{lemma}\label{lemma5}Let $X$ be a biquandle and let $Y$ be a medial biquandle. \begin{enumerate}
\item[(a)] If $Y$ is involutory, then $\Ho (X,Y)$ is also involutory. 
\item[(b)] If $Y$ is commutative, then $\Ho (X,Y)$ is also commutative. 
\end{enumerate}
\end{lemma}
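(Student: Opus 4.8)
The plan is to exploit the fact, built into the definition of the Hom-biquandle, that both operations $\dow$ and $\up$ on $\Ho(X,Y)$ are defined pointwise: $(f\dow g)(x)=f(x)\dow g(x)$ and $(f\up g)(x)=f(x)\up g(x)$ for all $x\in X$. Consequently, every identity in the operations $\dow,\up$ that holds universally in $Y$ is inherited by $\Ho(X,Y)$, simply by evaluating at an arbitrary point $x$ and applying the identity in $Y$ to the elements $f(x),g(x)\in Y$. Since $Y$ is medial, Proposition \ref{prop2} guarantees that $\Ho(X,Y)$ is already a biquandle, so the statements ``$\Ho(X,Y)$ is involutory'' and ``$\Ho(X,Y)$ is commutative'' make sense; it remains only to verify the extra axioms.

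For part (a), I would fix $f,g\in\ho(X,Y)$ and verify the four involutory equalities one at a time, each by a one-line pointwise computation. For instance, for any $x\in X$,
\[
\bigl(f\dow (g\up f)\bigr)(x)=f(x)\dow\bigl(g(x)\up f(x)\bigr)=f(x)\dow g(x)=(f\dow g)(x),
\]
using the involutory identity $a\dow(b\up a)=a\dow b$ in $Y$ with $a=f(x)$, $b=g(x)$; the identities $f\up(g\dow f)=f\up g$, $(f\dow g)\dow g=f$, and $(f\up g)\up g=f$ follow the same way from the remaining three involutory identities in $Y$. For part (b), commutativity of $Y$ gives, for every $x\in X$, $(f\dow g)(x)=f(x)\dow g(x)=g(x)\dow f(x)=(g\dow f)(x)$ and likewise $(f\up g)(x)=(g\up f)(x)$, so $f\dow g=g\dow f$ and $f\up g=g\up f$ as maps, which is exactly commutativity of $\Ho(X,Y)$.

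There is essentially no serious obstacle here: the argument is a routine transfer of universally quantified equational axioms through pointwise-defined operations, and the only thing one must not forget is to first invoke Proposition \ref{prop2} so that $\Ho(X,Y)$ is known to be a biquandle (and, in part (a), that the composite maps $g\up f$, $f\dow g$, etc., again lie in $\ho(X,Y)$, which is part of that proposition's content). If anything needs care, it is merely bookkeeping the four involutory equations in (a) and matching each to the corresponding identity in $Y$; no mediality beyond what Proposition \ref{prop2} already uses is required for this lemma.
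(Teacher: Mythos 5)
Your proof is correct and matches the paper's approach: the paper dismisses this lemma with ``This follows from a straightforward computation,'' and the straightforward computation is exactly the pointwise transfer of universally quantified identities from $Y$ to $\Ho(X,Y)$ that you carry out. Your explicit note that Proposition \ref{prop2} is needed first so that $\Ho(X,Y)$ is a biquandle is a sensible bit of care the paper leaves implicit.
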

\begin{proof} This follows from a straightforward computation. 
\end{proof}

A biquandle $X$ is called a \textbf{constant action biquandle} if $x\dow y=x\up y=\sigma (x)$ for some bijection $\sigma \colon X\to X$.

\begin{lemma}\label{lemma6}Any constant action biquandle is medial. 
\end{lemma}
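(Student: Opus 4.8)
The plan is to verify the three mediality equations from Definition \ref{def5} directly, using the defining property $x\dow y=x\up y=\sigma(x)$ of a constant action biquandle. The key observation is that in such a biquandle both operations collapse to the single map $\sigma$, so every iterated operation on the left or right side of a mediality equation reduces to an iterate of $\sigma$ applied to the first argument.

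Concretely, for arbitrary $x,y,z,w\in X$ I would compute $(x\dow y)\dow(z\dow w)=\sigma(x)\dow\sigma(z)=\sigma(\sigma(x))=\sigma^2(x)$, and likewise $(x\dow z)\dow(y\dow w)=\sigma(\sigma(x))=\sigma^2(x)$, so the first equation holds. For the second equation, $(x\dow y)\up(z\dow w)=\sigma(x)\up\sigma(z)=\sigma^2(x)$ and $(x\up z)\dow(y\up w)=\sigma(x)\dow\sigma(y)=\sigma^2(x)$, so these agree. The third equation is handled identically: both $(x\up y)\up(z\up w)$ and $(x\up z)\up(y\up w)$ equal $\sigma^2(x)$. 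Hence all three mediality equalities hold and $X$ is medial.

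There is essentially no obstacle here; the only thing to be slightly careful about is confirming that $X$ as described genuinely is a biquandle in the first place (so that "medial biquandle" is a meaningful conclusion), but this is part of the hypothesis encoded in the phrase "constant action biquandle," and in any case the bijectivity of $\sigma$ guarantees the invertibility conditions in Definition \ref{def2} and the idempotency axiom $x\dow x=\sigma(x)=x\up x$ is immediate. So the proof amounts to the short symmetric computation above, which is why the statement can be recorded with the brief justification that it follows from a direct calculation.
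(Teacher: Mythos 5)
Your proof is correct, but it takes a different route from the paper's. You verify the three mediality identities of Definition \ref{def5} by direct computation: since both operations ignore their second argument and reduce to $\sigma$ applied to the first, every side of every identity collapses to $\sigma^2(x)$, and the check is immediate. The paper instead factors the argument through its earlier machinery: by Lemma \ref{lemma1} the associated quandle $\q(X)$ is trivial (hence medial), the constant action biquandle is exactly the biquandle induced by the constant biquandle structure $\{\sigma\}$ on that trivial quandle, and Lemma \ref{lemma2} then delivers mediality. Your computation is more elementary and self-contained, and arguably more transparent; the paper's version buys a conceptual placement of constant action biquandles within the biquandle-structure framework (as constant structures on trivial quandles) and reuses results already proved, at the cost of depending on Lemmas \ref{lemma1} and \ref{lemma2}. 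One small terminological quibble: the first biquandle axiom is $x\dow x = x\up x$, not idempotency in the sense $x\dow x = x$; your verification $x\dow x = \sigma(x) = x\up x$ is what is actually required, so the substance is fine.
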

\begin{proof} Let $X$ be a constant action biquandle in which $x\dow y=x\up y=\sigma (x)$ for some bijection $\sigma \colon X\to X$. By Lemma \ref{lemma1}, its associated quandle $\q (X)$ is trivial and thus medial. Since $X$ is defined by the constant biquandle structure $\{\sigma \}\subset Aut(\q (X))$, it is medial by Lemma \ref{lemma2}. 
\end{proof}  

\begin{proposition}\label{prop5} Let $X$ be a biquandle. If  $Y$ is a constant action biquandle, then $\Ho (X,Y)$ is a constant action biquandle. 
\end{proposition}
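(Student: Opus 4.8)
The plan is to exhibit explicitly the bijection of $\ho(X,Y)$ that witnesses the constant action property, namely post-composition with $\sigma$. First I would note that $\Ho(X,Y)$ is indeed a biquandle to begin with: a constant action biquandle is medial by Lemma \ref{lemma6}, so Proposition \ref{prop2} applies to $Y$. Write $x\dow y=x\up y=\sigma(x)$ in $Y$ for the given bijection $\sigma\colon Y\to Y$. The key preliminary observation is that $\sigma$ is itself a biquandle automorphism of $Y$: for all $a,b\in Y$ one has $\sigma(a\dow b)=\sigma(\sigma(a))=\sigma(a)\dow\sigma(b)$ and likewise $\sigma(a\up b)=\sigma(\sigma(a))=\sigma(a)\up\sigma(b)$, and $\sigma^{-1}$ satisfies the same identities. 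Consequently, for every $f\in\ho(X,Y)$ the composite $\sigma\circ f\colon X\to Y$ is again a biquandle homomorphism, so the rule $\sigma_{*}(f)=\sigma\circ f$ defines a map $\sigma_{*}\colon\ho(X,Y)\to\ho(X,Y)$, and it is a bijection since $g\mapsto\sigma^{-1}\circ g$ is a two-sided inverse that also lands in $\ho(X,Y)$.

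Next I would carry out the (one-line) pointwise computation on the Hom-biquandle: for $f,g\in\ho(X,Y)$ and $x\in X$,
$$(f\dow g)(x)=f(x)\dow g(x)=\sigma(f(x))=(\sigma_{*}(f))(x)$$
and identically $(f\up g)(x)=f(x)\up g(x)=\sigma(f(x))=(\sigma_{*}(f))(x)$. Hence $f\dow g=f\up g=\sigma_{*}(f)$ for all $f,g\in\ho(X,Y)$, which is exactly the defining condition for $\Ho(X,Y)$ to be a constant action biquandle, with associated bijection $\sigma_{*}$.

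There is essentially no obstacle here; the only point requiring (minimal) care is verifying that $\sigma$ and $\sigma^{-1}$ are biquandle automorphisms of $Y$, so that post-composition genuinely preserves $\ho(X,Y)$ and $\sigma_{*}$ is a bijection of the hom-set rather than merely of the set of all functions $X\to Y$. Everything else is the pointwise identity displayed above.
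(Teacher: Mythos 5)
Your proof is correct and follows essentially the same route as the paper: both define $\sigma^{*}(f)=\sigma\circ f$, verify it is a bijection of the hom-set, and conclude via the pointwise identity $f\dow g=f\up g=\sigma^{*}(f)$. Your observation that $\sigma$ and $\sigma^{-1}$ are themselves biquandle endomorphisms of $Y$ packages the surjectivity step a bit more cleanly than the paper's explicit verification that $\sigma^{-1}\circ g$ is a homomorphism, but the content is the same.
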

\begin{proof}Let $Y$ be a constant action biquandle. There exists a bijection $\sigma \colon Y\to Y$ such that $x\dow y=x\up y=\sigma (x)$ for every $x,y\in Y$. It follows that $$(f\dow g)(x)=f(x)\dow g(x)=f(x)\up g(x)=(f\up g)(x)=\sigma (f(x))$$ for every $f,g\in \Ho (X,Y)$ and every $x\in X$. Define a map $\sigma ^{*}\colon \Ho (X,Y)\to \Ho (X,Y)$ by $\sigma ^{*}(f)=\sigma \circ f$. Since $\sigma $ is injective, it follows that $\sigma ^{*}$ is injective. 

It remains to show that $\sigma ^{*}$ is surjective. Let $g\in \Ho (X,Y)$. Since $\sigma $ is surjective, for every $x\in X$ there exists a $\psi (x)\in Y$ such that $\sigma (\psi (x))=g(x)$. This defines a function $\psi \colon X\to Y$. We need to show that $\psi $ is a biquandle homomorphism. We have 
\begin{xalignat*}{1}
& \psi (x_1\dow x_2)=\sigma ^{-1}(g(x_1\dow x_2))=\sigma ^{-1}(g(x_1)\dow g(x_2))=\sigma ^{-1}(\sigma (g(x_{1})))=g(x_{1}) \; \textrm{and}\\
& \psi (x_1)\dow \psi (x_2)=(\sigma ^{-1}\circ g)(x_1)\dow (\sigma ^{-1}\circ g)(x_{2})=\sigma ^{-1}(g(x_{1}))\dow \sigma ^{-1}(g(x_{2}))=\sigma (\sigma ^{-1}(g(x_1)))=g(x_1)
\end{xalignat*} Therefore $\psi (x_1\dow x_2)=\psi (x_1)\dow \psi (x_2)$, and an analogous calculation shows that $\psi (x_1\up x_2)=\psi (x_1)\up \psi (x_2)$. We have thus found a biquandle homomorphism $\psi \colon X\to Y$ for which $g=\sigma ^{*}(\psi )$. Therefore $\sigma ^{*}$ is a bijection on $\Ho (X,Y)$ and $f\dow g=f\up g=\sigma ^{*}(f)$ for every $f,g\in \Ho (X,Y)$, which shows that $\Ho (X,Y)$ is a constant action biquandle. 
\end{proof}

\begin{proposition}\label{prop6}
$\ho (-,Z)$ is a functor from the category of biquandles to the category of medial biquandles for any medial biquandle $Z$. $\ho (A,-)$ is an endofunctor of the category of medial biquandles for any biquandle $A$. 
\end{proposition}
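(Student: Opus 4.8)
The plan is to verify the two functoriality statements by checking that the standard pre- and post-composition assignments are well-defined on the relevant hom-biquandles and are compatible with the pointwise operations $\dow,\up$. Concretely, for a fixed medial biquandle $Z$ and a biquandle homomorphism $\varphi\colon A\to B$, define $\ho(\varphi,Z)\colon \ho(B,Z)\to \ho(A,Z)$ by $\ho(\varphi,Z)(g)=g\circ\varphi$; for a fixed biquandle $A$ and a homomorphism $\psi\colon Z\to W$ of medial biquandles, define $\ho(A,\psi)\colon\ho(A,Z)\to\ho(A,W)$ by $\ho(A,\psi)(f)=\psi\circ f$. First I would record that these composites are again biquandle homomorphisms (immediate from the definitions), so the assignments land where claimed, and that $\ho(X,Y)$ carries its pointwise biquandle structure by Proposition \ref{prop2} whenever $Y$ is medial — this is exactly what guarantees the two categories in the statement make sense.

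Next I would check that $\ho(\varphi,Z)$ and $\ho(A,\psi)$ are themselves biquandle homomorphisms for the pointwise operations. For $\ho(\varphi,Z)$, given $g,h\in\ho(B,Z)$ and $x\in A$,
\begin{xalignat*}{1}
& \big(\ho(\varphi,Z)(g\dow h)\big)(x)=(g\dow h)(\varphi(x))=g(\varphi(x))\dow h(\varphi(x))=\big(\ho(\varphi,Z)(g)\dow \ho(\varphi,Z)(h)\big)(x),
\end{xalignat*}
and identically with $\up$ in place of $\dow$; so $\ho(\varphi,Z)$ preserves both operations. For $\ho(A,\psi)$, given $f,f'\in\ho(A,Z)$ and $x\in A$, using that $\psi$ is a biquandle homomorphism,
\begin{xalignat*}{1}
& \big(\ho(A,\psi)(f\dow f')\big)(x)=\psi\big(f(x)\dow f'(x)\big)=\psi(f(x))\dow\psi(f'(x))=\big(\ho(A,\psi)(f)\dow\ho(A,\psi)(f')\big)(x),
\end{xalignat*}
and likewise for $\up$; so $\ho(A,\psi)$ is a homomorphism of the pointwise biquandles. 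Finally, functoriality on identities and composites is formal: $\ho(\mathrm{id}_A,Z)=\mathrm{id}$, $\ho(\varphi\circ\varphi',Z)=\ho(\varphi',Z)\circ\ho(\varphi,Z)$ (contravariance), and $\ho(A,\mathrm{id}_Z)=\mathrm{id}$, $\ho(A,\psi\circ\psi')=\ho(A,\psi)\circ\ho(A,\psi')$ (covariance), each following from associativity of composition of maps.

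The one point that deserves genuine care — and which I expect to be the main, if modest, obstacle — is the target of the first functor. The claim is not merely that $\ho(-,Z)$ lands in biquandles but in \emph{medial} biquandles, and dually that $\ho(A,-)$ is an \emph{endo}functor of medial biquandles; both assertions rest on Proposition \ref{prop2}, so I would explicitly invoke it to see that $\ho(B,Z)$, $\ho(A,Z)$, $\ho(A,W)$ are all medial (the first two because $Z$ is medial, the last because $W$ is medial). For $\ho(A,-)$ one must also note that a homomorphism of medial biquandles is in particular a biquandle homomorphism, so $\ho(A,\psi)$ is defined exactly as above; no extra hypothesis is needed. With mediality of all the hom-objects in hand and the computations above, both statements follow.
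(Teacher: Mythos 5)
Your proposal is correct and follows essentially the same route as the paper: define pre-composition $g\mapsto g\circ\varphi$ and post-composition $f\mapsto\psi\circ f$, verify compatibility with the pointwise operations by the same one-line computations, and invoke Proposition \ref{prop2} for mediality of the hom-objects. You are slightly more thorough in spelling out preservation of identities and composites, which the paper leaves implicit.
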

\begin{proof}Let $Z$ be a medial biquandle. For any biquandles $X$ and $Y,$ $\Ho (X,Z)$ and $\Ho (Y,Z)$ are biquandles by Proposition \ref{prop2}. For a biquandle homomorphism $h\colon X\to Y$, the map $h_{*}\colon \Ho (Y,Z)\to \Ho (X,Z)$ is given by $h_{*}(f)=f\circ h$. To see that $h_{*}$ is a biquandle homomorphism, we compute
\begin{xalignat*}{1}
& h_{*}(f\dow g)(x)=((f\dow g)\circ h)(x)=f(h(x))\dow g(h(x))=(h_{*}(f)\dow h_{*}(g))(x)\;,
\end{xalignat*} and similarly for the other operation. 

Let $A$ be a biquandle. For any medial biquandles $X$ and $Y$, $\Ho (A,X)$ and $\Ho (A,Y)$ are medial biquandles by Proposition \ref{prop2}. For a biquandle homomorphism $h\colon X\to Y$, the map $h^{*}\colon \Ho (A,X)\to \Ho (A,Y)$ is given by $h^{*}(f)=h\circ f$. To check that $h^{*}$ is a biquandle homomorphism, we compute
\begin{xalignat*}{1}
& h^{*}(f\dow g)(x)=(h\circ (f\dow g))(x)=h(f(x))\dow h(g(x))=(h^{*}(f)\dow h^{*}(g))(x)\;,
\end{xalignat*} and similarly for the other operation. 
\end{proof}

For a finitely generated biquandle $X$ and a medial biquandle $Y$, biquandles $Y$ and $\Ho (X,Y)$ are also related via subbiquandle inclusions. The following statement generalizes an analogous result about Hom-quandles, see \cite[Theorem 8]{CR1}.

\begin{proposition}\label{prop7} Let $X$ be a finitely generated biquandle and let $Y$ be a medial biquandle. Then $\Ho (X,Y)$ is isomorphic to a subbiquandle of $Y^{k}$, where $k$ is the size of a minimal generating set for $X$. 
\end{proposition}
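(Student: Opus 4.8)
The plan is to mimic the standard argument for Hom-quandles (\cite[Theorem 8]{CR1}) in the biquandle setting, using evaluation maps at a generating set. Let $\{x_1,\dots,x_k\}$ be a minimal generating set for $X$. Define the evaluation map $\Psi\colon \Ho(X,Y)\to Y^k$ by $\Psi(f)=(f(x_1),\dots,f(x_k))$, where $Y^k$ carries the componentwise operations $\dow,\up$; since each coordinate is a biquandle by hypothesis (and a finite product of biquandles is a biquandle, as the axioms are componentwise identities and the relevant maps $\alpha_y,\beta_y,S$ act coordinatewise), $Y^k$ is indeed a biquandle, and it is medial because $Y$ is. First I would check that $\Psi$ is a biquandle homomorphism: this is immediate from the pointwise definition of the operations on $\Ho(X,Y)$, namely $\Psi(f\dow g)=((f\dow g)(x_1),\dots,(f\dow g)(x_k))=(f(x_1)\dow g(x_1),\dots,f(x_k)\dow g(x_k))=\Psi(f)\dow\Psi(g)$, and similarly for $\up$.

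Next I would show $\Psi$ is injective, which is where the generating hypothesis enters. A biquandle homomorphism $f\colon X\to Y$ is determined by its values on any generating set: every element of $X$ is obtained from $x_1,\dots,x_k$ by finitely many applications of $\dow$, $\up$ and their inverses in the first argument (recall $\alpha_y,\beta_y$ are bijections), so $f(x_i)=g(x_i)$ for all $i$ forces $f=g$. I would make this precise by an induction on the length of a word representing an element of $X$ in terms of the generators, using that $f$ intertwines $\dow,\up$ and hence also $\alpha^{-1},\beta^{-1}$ with the corresponding maps on $Y$. Consequently $\Psi$ is an injective biquandle homomorphism, hence a biquandle isomorphism onto its image $\Psi(\Ho(X,Y))\subseteq Y^k$, and the image is a subbiquandle since it is the image of a biquandle homomorphism between biquandles (closed under $\dow,\up$, and inheriting the bijectivity conditions from being isomorphic to $\Ho(X,Y)$, which is a genuine biquandle by Proposition \ref{prop2}).

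The main obstacle I anticipate is a subtle one about what "subbiquandle" should mean and whether the image is genuinely one: for quandles a subset closed under $*$ automatically has each $R_y$ restricting to a bijection only if the subset is finite or one is careful, and for biquandles the analogous point concerns whether $\alpha_y,\beta_y,S$ restrict to bijections of the image. The clean way around this is to observe that we need not verify the subbiquandle axioms on $\Psi(\Ho(X,Y))$ directly: since $\Ho(X,Y)$ is already known to be a biquandle (Proposition \ref{prop2}) and $\Psi$ is an injective homomorphism, we simply transport the biquandle structure of $\Ho(X,Y)$ across $\Psi$, so the image is a biquandle with the restricted operations, hence a subbiquandle of $Y^k$ in the appropriate sense, and $\Psi$ is an isomorphism onto it. I would state this transport-of-structure point explicitly to avoid the trap, and otherwise the proof is a routine verification.
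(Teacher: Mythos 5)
Your proof is correct and follows essentially the same route as the paper's: evaluation at a minimal generating set, $j(f)=(f(x_{1}),\ldots ,f(x_{k}))$, is an injective biquandle homomorphism into $Y^{k}$ because homomorphisms are determined by their values on generators and the operations on $\Ho (X,Y)$ are pointwise, so $\Ho (X,Y)\cong Im(j)\leq Y^{k}$. Your additional remarks on closure under the inverse operations and on transporting the biquandle structure across $j$ to justify that the image is a subbiquandle are sensible elaborations, but they do not change the argument.
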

\begin{proof}Let $\{x_{1},\ldots ,x_{k}\}$ be a minimal generating set for $X$. Define a map $j\colon \Ho (X,Y)\to Y^{k}$ by $j(f)=(f(x_{1}),\ldots ,f(x_{k}))$. 

Every biquandle homomorphism in $\Ho (X,Y)$ is completely determined by its values on the generating set, thus $j$ is injective. For two homomorphisms $f,g\in \Ho (X,Y)$ we have 
\begin{eqnarray*}
 j(f\dow g) = \left ((f\dow g)(x_{1}),\ldots ,(f\dow g)(x_{k})\right ) &=& \left (f(x_1)\dow g(x_1),\ldots ,f(x_k)\dow g(x_k)\right ) \\
&=& \left (f(x_{1}),\ldots ,f(x_{k})\right )\dow \left (g(x_{1}),\ldots ,g(x_k)\right ) \\
&=& j(f)\dow j(g)
\end{eqnarray*}  and similarly for the other operation. It follows that $j$ is a biquandle monomorphism, and thus $\Ho (X,Y)$ is isomorphic to $Im(j)\leq Y^{k}$. 
\end{proof}

\begin{remark}We have shown that $\Ho (X,Y)$ is isomorphic to the subbiquandle $Im(j)$: $$\left \{\left (f(x_{1}),\ldots ,f(x_k)\right )\, |\, f(x_{i}\dow x_{j})=f(x_i)\dow f(x_j)\textrm{ and }f(x_{i}\up x_{j})=f(x_i)\up f(x_j)\, \forall i,j\in \{1,\ldots ,k\}\right \}\;,$$ which is precisely the set of all biquandle colorings of $\{x_{1},\ldots ,x_{k}\}$ by the biquandle $Y$. 
\end{remark}

In the remainder of this Section, we investigate how the source biquandle $X$ of $\Ho (X,Y)$ may be simplified and still yield the same Hom-biquandle. Our results generalize the results about Hom-quandles from \cite{CR2}.

\begin{definition} An equivalence relation $\sim $ on a biquandle $X$ is called a \textbf{congruence} if ($x\sim y$ and $z\sim w$) implies ($x\dow z\sim y\dow w$ and $x\up z\sim y\up w$) for every $x,y,z,w\in X$.
\end{definition}   

 For each congruence on $X$, the quotient set $X/_{\sim }$ forms a \textbf{quotient biquandle} with the induced operations on equivalence classes.
  
 Let $\mathcal{I}$ be a collection of identities on a biquandle $X$. We denote by $Cg(\mathcal{I})$ the minimal congruence such that $a\sim b$ whenever there exist $x_{1},\ldots ,x_{n}$ such that $a=p(x_{1},\ldots ,x_{n})$ and $b=q(x_{1},\ldots ,x_{n})$, where $p=q$ is an identity in $\mathcal{I}$. We call $Cg(\mathcal{I})$ the congruence, generated by $\mathcal{I}$.  

\begin{proposition}\label{prop8}Let $X$ and $Y$ be biquandles and let $\mathcal{I}$ be a set of identities, satisfied by $Y$. Then $\ho (X,Y)\cong \ho (X/Cg(\mathcal{I}),Y)$ as sets. 
\end{proposition}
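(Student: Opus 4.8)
The plan is to exhibit a natural bijection between $\ho(X,Y)$ and $\ho(X/Cg(\mathcal{I}),Y)$ induced by the quotient map $\pi\colon X\to X/Cg(\mathcal{I})$. First I would show that precomposition with $\pi$ gives a well-defined injection $\pi^{*}\colon \ho(X/Cg(\mathcal{I}),Y)\to \ho(X,Y)$, $\pi^{*}(\bar f)=\bar f\circ\pi$; injectivity is immediate since $\pi$ is surjective, and $\bar f\circ\pi$ is a biquandle homomorphism because both $\bar f$ and $\pi$ are. The substance of the argument is surjectivity: given any $h\in\ho(X,Y)$, I must produce $\bar h\in\ho(X/Cg(\mathcal{I}),Y)$ with $\bar h\circ\pi=h$, which amounts to showing that $h$ is constant on the equivalence classes of $Cg(\mathcal{I})$, i.e. that $x\sim_{Cg(\mathcal{I})} x'$ implies $h(x)=h(x')$.

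To prove that $h$ respects $Cg(\mathcal{I})$, I would use the explicit description of $Cg(\mathcal{I})$ given just before the proposition. Define a relation $\sim_h$ on $X$ by $x\sim_h x'$ iff $h(x)=h(x')$. This $\sim_h$ is clearly an equivalence relation, and it is a congruence: if $h(x)=h(x')$ and $h(z)=h(z')$ then $h(x\dow z)=h(x)\dow h(z)=h(x')\dow h(z')=h(x'\dow z')$, and similarly for $\up$. By minimality of $Cg(\mathcal{I})$, it therefore suffices to check that $\sim_h$ contains the generating pairs: whenever $a=p(x_1,\ldots,x_n)$ and $b=q(x_1,\ldots,x_n)$ for an identity $p=q$ in $\mathcal{I}$, we need $h(a)=h(b)$. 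Since $h$ is a biquandle homomorphism, $h\bigl(p(x_1,\ldots,x_n)\bigr)=p\bigl(h(x_1),\ldots,h(x_n)\bigr)$ and likewise for $q$ (a homomorphism commutes with any term built from $\dow,\up$), and because the identity $p=q$ holds in $Y$ we get $p\bigl(h(x_1),\ldots,h(x_n)\bigr)=q\bigl(h(x_1),\ldots,h(x_n)\bigr)$, hence $h(a)=h(b)$. Thus $Cg(\mathcal{I})\subseteq\sim_h$, so $h$ descends to a well-defined map $\bar h$ on the quotient, and $\bar h$ is a biquandle homomorphism since $\pi$ is a surjective homomorphism and $\bar h\circ\pi=h$ is one.

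Putting these together, $\pi^{*}$ is a bijection $\ho(X/Cg(\mathcal{I}),Y)\to\ho(X,Y)$, proving the claim at the level of sets. I would also remark that when $Y$ is medial this bijection is in fact an isomorphism of Hom-biquandles, since $\pi^{*}$ is visibly compatible with the pointwise operations: $\pi^{*}(\bar f\dow\bar g)(x)=(\bar f\dow\bar g)(\pi(x))=\bar f(\pi(x))\dow\bar g(\pi(x))=(\pi^{*}(\bar f)\dow\pi^{*}(\bar g))(x)$, and similarly for $\up$.

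The only genuinely delicate point is the justification that a biquandle homomorphism commutes with arbitrary terms $p(x_1,\ldots,x_n)$ built from the two operations — this is a routine induction on the structure of the term $p$, using the defining property $h(u\dow v)=h(u)\dow h(v)$ and $h(u\up v)=h(u)\up h(v)$ at each node — together with being careful that the elements $x_1,\ldots,x_n$ appearing in the generating identity are arbitrary elements of $X$, so that the hypothesis ``$\mathcal{I}$ is satisfied by $Y$'' can be applied to the particular tuple $\bigl(h(x_1),\ldots,h(x_n)\bigr)$ in $Y$. No real obstacle is expected; the main care is in not conflating the congruence generated by $\mathcal{I}$ on $X$ with the (possibly smaller) set of pairs $(a,b)$ arising literally from substitution, which is exactly why the minimality characterization of $Cg(\mathcal{I})$ and the congruence property of $\sim_h$ are both needed.
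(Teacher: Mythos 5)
Your proof is correct and follows essentially the same route as the paper: both arguments show that any $h\in\ho(X,Y)$ satisfies $Cg(\mathcal{I})\subseteq\mathrm{Ker}(h)$ (because a homomorphism commutes with terms and the identities in $\mathcal{I}$ hold in $Y$) and then descend through the quotient, with the inverse bijection given by precomposition with $\pi$. Your version merely makes explicit the minimality argument (that the kernel congruence contains the generating pairs and hence all of $Cg(\mathcal{I})$) that the paper leaves implicit.
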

\begin{proof}Denote by $\pi _{Cg(\mathcal{I})}\colon X\to X/Cg(\mathcal{I})$ the quotient homomorphism. For every $f\in \ho (X,Y)$ and for any identity $p=q$ in $\mathcal{I}$, we have $$f(p(x_{1},\ldots ,x_{n}))=p(f(x_{1},\ldots ,x_{n}))=q(f(x_{1},\ldots ,x_{n}))=f(q(x_{1},\ldots ,x_{n}))\;,$$ therefore $Cg(\mathcal{I})\subseteq Ker(f)$. By the First Isomorphism Theorem, there exists a unique biquandle homomorphism $\widetilde{f}\colon X/Cg(\mathcal{I})\to Y$ such that $\widetilde{f}\circ \pi _{Cg(\mathcal{I})}=f$. We define a map $\phi \colon \ho (X,Y)\to \ho (X/Cg(\mathcal{I}),Y)$ by $\phi (f)=\widetilde{f}$. Then $\phi $ is a bijection with inverse $\psi \colon \ho (X/Cg(\mathcal{I}),Y)\to \ho (X,Y)$, given by $\psi (g)=g\circ \pi _{Cg(\mathcal{I})}$. 
\end{proof}

\begin{definition} A biquandle $X$ is called \textbf{2-reductive} if the equalities 
\begin{xalignat*}{1}
& a\dow (b\dow c)=a\dow b\quad \quad a\up (b\up c)=a\up b\\
& a\dow (b\up c)=a\dow b\quad \quad a\up (b\dow c)=a\up b
\end{xalignat*} are satisfied for every $a,b,c\in X$. 
\end{definition} 

For example, every constant action biquandle is 2-reductive. 

\begin{lemma}\label{lemma7} A 2-reductive biquandle is medial. 
\end{lemma}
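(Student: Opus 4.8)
The plan is to verify the three mediality identities of Definition~\ref{def5} directly, using the fact that in a 2-reductive biquandle every compound expression of the form $a\,\dow\,(\text{anything built from }b,c)$ collapses to $a\dow b$, and likewise with $\up$. First I would record the four reduction rules as a single observation: for any elements $u$ and any element $v$ that is obtained from $p,q$ by a single application of $\dow$ or $\up$, we have $u\dow v=u\dow p$ and $u\up v=u\up p$; iterating, $u\dow v=u\dow p$ and $u\up v=u\up p$ whenever $v$ lies in the subbiquandle generated by $\{p,q,\dots\}$ with $p$ the ``leftmost'' generator, but in fact a single layer of reduction is all that is needed here.

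Next I would treat the first identity $(x\dow y)\dow(z\dow w)=(x\dow z)\dow(y\dow w)$. Apply 2-reductivity to the left side with $a=x\dow y$, $b=z$, $c=w$ to get $(x\dow y)\dow(z\dow w)=(x\dow y)\dow z$; apply it again (now the inner expression $x\dow y$ is in the left slot of $\dow z$, so this time use it on the outer argument structure) — more carefully, note $(x\dow y)\dow z = (x\dow z)$ is \emph{not} valid, so instead I observe that both sides of the claimed identity reduce, via $a\dow(b\dow c)=a\dow b$, to $(x\dow y)\dow z$ and $(x\dow z)\dow y$ respectively, and then I need $(x\dow y)\dow z=(x\dow z)\dow y$. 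This last equality I would derive from the exchange law $(x\dow y)\dow(z\dow y)=(x\dow z)\dow(y\up z)$ of Definition~\ref{def2} together with 2-reductivity: the left-hand side reduces to $(x\dow y)\dow z$ and the right-hand side reduces to $(x\dow z)\dow y$. The other two mediality identities follow by entirely parallel reductions: for $(x\up y)\up(z\up w)=(x\up z)\up(y\up w)$ use the third exchange law $(x\up y)\up(z\up y)=(x\up z)\up(y\dow z)$, and for the mixed identity $(x\dow y)\up(z\dow w)=(x\up z)\dow(y\up w)$ use the middle exchange law $(x\dow y)\up(z\dow y)=(x\up z)\dow(y\up z)$, in each case reducing the $y\up z$ or $y\dow z$ factor on the right to $y$ and reducing the $\dow w$ / $\up w$ factors on the left to $z$.

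The main obstacle I anticipate is bookkeeping rather than conceptual: one must be careful that 2-reductivity only lets us simplify the \emph{second} argument of an operation (the expression being ``acted upon'' from the right), never the first, so the reductions on the two sides of each mediality identity must be arranged so that after reduction both sides literally coincide or differ exactly by an instance of one of the three exchange laws. I would present this cleanly by, for each of the three identities, displaying a short chain of equalities in an \texttt{align*} block: first reduce the left side, then invoke the relevant exchange law (possibly after also reducing \emph{its} right side by 2-reductivity), then reduce the right side of the mediality identity to meet it. No surjectivity/bijectivity arguments are needed, so the proof is purely equational.
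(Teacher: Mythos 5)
Your proof is correct and follows essentially the same route as the paper: each mediality identity is obtained by using the 2-reductivity rules to collapse both sides to the two sides of the corresponding exchange law from Definition~\ref{def2}. The brief mis-step you catch yourself on ($(x\dow y)\dow z \neq x\dow z$) does not affect the final argument, which matches the paper's chain of equalities exactly.
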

\begin{proof} Choose elements $a,b,c$ and $d$ of a 2-reductive biquandle $X$. Using 2-reductiveness and the third biquandle axiom, we compute
\begin{xalignat*}{1}
& (a\dow b)\dow (c\dow d)=(a\dow b)\dow c=(a\dow b)\dow (c\dow b)=(a\dow c)\dow (b\up c)=(a\dow c)\dow b=(a\dow c)\dow (b\dow d)\;,\\
& (a\up b)\up (c\up d)=(a\up b)\up c=(a\up b)\up (c\up b)=(a\up c)\up (b\dow c)=(a\up c)\up b=(a\up c)\up (b\up d)\textrm{ and }\\
& (a\up b)\dow (c\up d)=(a\up b)\dow c=(a\up b)\dow (c\up b)=(a\dow c)\up (b\dow c)=(a\dow c)\up b=(a\dow c)\up (b\dow d)\;.
\end{xalignat*} 
\end{proof}

 In a biquandle $X$, consider the relation 
\begin{xalignat*}{1}
& R=\left \{\left (a\dow (b\dow c),a\dow b\right ),\left (a\up (b\up c),a\up b\right ),\left (a\dow (b\up c),a\dow b\right ),\left (a\up (b\dow c),a\up b\right )\, |\, a,b,c\in X\right \}
\end{xalignat*} and denote the congruence generated by $R$ by $\gamma _{X}$. Relation $\gamma _{X}$ is the smallest congruence such that the quotient $X/\gamma _{X}$ is 2-reductive.

\begin{proposition}\label{prop9} Let $X$ be a biquandle and let $Y$ be a 2-reductive biquandle. Then $X/\gamma _{X}$ is 2-reductive and $\Ho (X,Y)\cong \Ho (X/\gamma _{X},Y)$ as biquandles.  
\end{proposition}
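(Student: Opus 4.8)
The plan is to combine Proposition~\ref{prop8} with the mediality statement of Lemma~\ref{lemma7}. First I would observe that $X/\gamma_X$ is $2$-reductive by construction: $\gamma_X$ is the congruence generated by exactly the relation $R$ whose pairs are the two sides of the four $2$-reductivity identities, so in the quotient these identities hold, and $\gamma_X$ is by definition the smallest congruence with this property. By Lemma~\ref{lemma7}, $X/\gamma_X$ is therefore medial, which is what we need for $\Ho(X/\gamma_X,Y)$ to be defined as a Hom-biquandle. Likewise $Y$ is medial by Lemma~\ref{lemma7}, so $\Ho(X,Y)$ is defined.

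Next I would apply Proposition~\ref{prop8} with $\mathcal{I}$ taken to be the four $2$-reductivity identities $a\dow(b\dow c)=a\dow b$, $a\up(b\up c)=a\up b$, $a\dow(b\up c)=a\dow b$, $a\up(b\dow c)=a\up b$. Since $Y$ is $2$-reductive, $\mathcal{I}$ is satisfied by $Y$, and the congruence generated by $\mathcal{I}$ in the sense of Proposition~\ref{prop8} is precisely $Cg(\mathcal{I})=\gamma_X$, since $\gamma_X$ was defined as the congruence generated by the relation $R$ consisting of exactly the instances $(p(x_1,\dots,x_n),q(x_1,\dots,x_n))$ of these identities. Proposition~\ref{prop8} then gives a bijection $\ho(X,Y)\cong\ho(X/\gamma_X,Y)$ of sets, realized explicitly by $\phi(f)=\widetilde f$, where $\widetilde f\circ\pi_{\gamma_X}=f$, with inverse $\psi(g)=g\circ\pi_{\gamma_X}$.

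It then remains to upgrade this set bijection to a biquandle isomorphism $\Ho(X,Y)\cong\Ho(X/\gamma_X,Y)$. For this I would check that $\psi$ (equivalently $\phi$) respects the pointwise operations: for $g,h\in\ho(X/\gamma_X,Y)$ and $x\in X$,
\begin{xalignat*}{1}
& \psi(g\dow h)(x)=((g\dow h)\circ\pi_{\gamma_X})(x)=g(\pi_{\gamma_X}(x))\dow h(\pi_{\gamma_X}(x))=(\psi(g)\dow\psi(h))(x)\;,
\end{xalignat*}
and identically for $\up$. Hence $\psi$ is a biquandle homomorphism, and since it is already a bijection with inverse $\phi$, it is an isomorphism of biquandles. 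This establishes the claim. I do not anticipate a serious obstacle here; the only point requiring care is the bookkeeping identification of $Cg(\mathcal{I})$ with $\gamma_X$, i.e.\ confirming that the generating relation $R$ used to define $\gamma_X$ is exactly the set of identity-instances arising from $\mathcal{I}$, so that Proposition~\ref{prop8} applies verbatim.
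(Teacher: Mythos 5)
Your proof is correct and follows essentially the same route as the paper: invoke Proposition~\ref{prop8} for the set bijection and then verify that it respects the pointwise operations (you check $\psi$ where the paper checks $\phi$, which is equivalent). One small remark: it is the mediality of the target $Y$ (guaranteed by Lemma~\ref{lemma7} since $Y$ is 2-reductive), not of the source $X/\gamma_X$, that makes $\Ho(X/\gamma_X,Y)$ a Hom-biquandle, but you establish that as well, so nothing is missing.
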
 
\begin{proof}Since $Y$ is 2-reductive, by Proposition \ref{prop8} there exists a bijection $\phi \colon \Ho (X,Y)\to \Ho (X/\gamma _{X},Y)$, which is given by $\phi (f)=\widetilde{f}$, where $\widetilde{f}\circ \pi _{\gamma _{X}}=f$. For any $f,g\in \Ho (X,Y)$ we have $\phi (f\dow g)=\widetilde{(f\dow g)}$, such that $\widetilde{(f\dow g)}(\pi _{\gamma _{X}}(x))=(f\dow g)(x)=f(x)\dow g(x)=\widetilde{f}(\pi _{\gamma _{X}}(x))\dow \widetilde{g}(\pi _{\gamma _{X}}(x))=\phi (f)(x)\dow \phi (g)(x)$. A similar calculation shows that $\phi (f\up g)=\phi (f)\up \phi (g)$ and it follows that $\phi $ is a biquandle isomorphism.  
\end{proof}

\section{Directions for Future Investigation}
\label{sec5}

We first wonder whether the analogs of the questions posed in the final section of \cite{CR1} hold for the Hom-biquandle.  That is, what other properties, other than those presented here, does the Hom-biquandle inherit from the source and target biquandles?  Given two connected biquandles, is the Hom-biquandle structure determined by the counting invariant?   

In addition, we seek a relationship between the cardinalities of the source and target biquandles and that of the Hom-biquandle.  In particular, could the notion of 2-reductiveness  lead to finding an analog of Corollary 3.24 in \cite{CR2}, enabling us to count and characterize the Hom-biquandle of a 2-reductive target and arbitrary source?

Finally, when considering a more complicated study of links (e.g.  virtual links), we sometimes must combine two or more different link invariants to obtain a stronger invariant.  What role can the Hom-biquandle play in these situations?

\section*{Acknowledgments}

Eva Horvat was supported by the Slovenian Research Agency grant N1-0083.  Alissa S. Crans was supported by a grant from the Simons Foundation (\#360097, Alissa Crans).

\end{document}